\documentclass[10pt]{article}
\usepackage{pifont}
\usepackage{amsmath}
\usepackage{amscd}
\usepackage{amsfonts}
\usepackage{latexsym}
\usepackage{amssymb}
\usepackage{stmaryrd}
\usepackage{overpic}
\usepackage{graphicx}
\usepackage{bm}
\usepackage{subcaption}
\usepackage{empheq}
\usepackage{multirow}
\usepackage{enumitem}
\usepackage{lineno}
\usepackage{rotating}
\usepackage[unicode,colorlinks,linkcolor=blue,hyperindex]{hyperref}

\usepackage{tikz}
\usetikzlibrary{er}

\newtheorem{theorem}{Theorem}[section]

\newtheorem{lemma}[theorem]{Lemma}

\newtheorem{remark}[theorem]{Remark}
\newenvironment{proof}[1][Proof]{\textbf{#1.} }
{\ \rule{0.75em}{0.75em}\smallskip}

\textwidth 6.5in \hoffset=-.8in \textheight=8.5in \voffset=-.65in
\parskip   1ex
\parsep    .5ex

\numberwithin{equation}{section}
\numberwithin{table}{section}
\numberwithin{figure}{section}

\newcommand{\vect}[1]{\boldsymbol{#1}} 



\begin{document}

\title{An Extended Galerkin Analysis for Elliptic Problems
\thanks{The work of Jinchao Xu was supported in part by US Department
of Energy Grant DE-SC0014400 and NSF grant DMS-1522615. The work of
Shuonan Wu was supported in part by the startup grant from Peking
University.}}

\author{Qingguo Hong\footnote{huq11@psu.edu@psu.edu, Department of
Mathematics, Pennsylvania State University, University Park, PA,
16802, USA} \quad 
Shuonan Wu\footnote{snwu@math.pku.edu.cn, School of Mathematical
Sciences, Peking University, Beijing, 100871, China}
\quad
Jinchao Xu\footnote{xu@math.psu.edu, Department of Mathematics,
Pennsylvania State University, University Park, PA, 16802, USA}}
\date{}
\maketitle 

\begin{abstract}
A general analysis framework is presented
in this paper for many different types of finite element methods
(including various discontinuous Galerkin methods).  For second order
elliptic equation, this framework employs $4$ different discretization
variables, $u_h, \bm{p}_h, \check u_h$ and $\check p_h$, where $u_h$
and $\bm{p}_h$ are for approximation of $u$ and $\bm{p}=-\alpha \nabla
u$ inside each element,  and $ \check u_h$ and $\check p_h$ are for
approximation of residual of $u$ and $\bm{p} \cdot \bm{n}$ on the
boundary of each element.  The resulting 4-field discretization is
proved to satisfy inf-sup conditions that are uniform with respect to
all discretization and penalization parameters.  As a result, most
existing finite element and discontinuous Galerkin methods can be
analyzed using this general framework by making appropriate choices of
discretization spaces and penalization parameters.
\end{abstract}


\section{Introduction} \label{sec:intro} 
In this paper, we propose an {\it extended Galerkin} analysis
framework for most of the existing finite element methods (FEMs).  We
will illustrate the main idea by using the following elliptic boundary
value problem
\begin{equation} \label{pois}
\left\{
\begin{aligned}
-{\rm div} (\alpha\nabla u)&= f ~~\quad {\rm in} \ \Omega, \\
 u&=g_D \quad {\rm on} \ \Gamma_D,\\
-(\alpha \nabla u) \cdot \bm{n} &=g_N \quad {\rm on} \ \Gamma_N,
\end{aligned}
\right.
\end{equation}
where $\Omega\subset \mathbb R^d$ ($d\ge 1$) is a bounded domain and
its boundary, $\partial\Omega$, is split into Dirichlet and Neumann
parts, namely $\partial\Omega = \Gamma_D \cup \Gamma_N$. For
simplicity, we assume that the $(d-1)$-dimensional measure of
$\Gamma_D$ is nonzero. Here $\bm n$ is the outward unit normal
direction of $\Gamma_N$, and $\alpha: \mathbb{R}^d \rightarrow
\mathbb{R}^d$ is a bounded and symmetric positive definite matrix,
with its inverse denoted by $c =\alpha^{-1}$.  Setting $\bm{p}
=-\alpha\nabla u$, the above problem can be written as 
\begin{equation}\label{H1}
\left\{
\begin{aligned}
c \bm{p} +\nabla u &= 0 & {\rm in}\ \Omega, \\ 
-{\rm div}  \bm{p} &= -f & {\rm in}\ \Omega, 
\end{aligned}
\right.
\end{equation}   
with the boundary condition $u=g_D~{\rm on}~\Gamma_D$ and $\bm
p\cdot\bm n=g_N~{\rm on}~\Gamma_N$.

There are two major variational formulations for \eqref{pois}.  The
first is to find $u\in H^1_D(\Omega):= \{v\in H^1(\Omega):
v|_{\Gamma_D} = g_D\}$ such that for any 
$v\in H_{D0}^1(\Omega) :=\{ v\in H^1(\Omega):~ v|_{\Gamma_D} = 0\}$, 
\begin{equation} \label{H1-vari}
\int_\Omega (\alpha\nabla u) \cdot \nabla v \,\mathrm{d}x = 
\int_\Omega fv \,\mathrm{d}x -
\int_{\Gamma_N} g_N v\,\mathrm{d}s.
\end{equation}
The second one is to find $\bm{p}\in H_N({\rm div};\Omega):=
  \{\bm{q}\in H({\rm div}):~\bm{q}\cdot \bm{n} = g_N\}$, $u\in
L^2(\Omega)$ such that for any $ \bm{q} \in H_{N0}({\rm div};\Omega):= 
\{\bm{q}\in H({\rm div}):~\bm{q}\cdot \bm{n} = 0\}$ and $v\in
L^2(\Omega)$,
\begin{equation} \label{mixed-vari}
\left\{
\begin{aligned}
\int_{\Omega} \bm{p}\cdot \bm{q}\,\mathrm{d}x - \int_\Omega u 
\,{\rm div}\bm{q}\,\mathrm{d}x &= -\int_{\Gamma_D} g_D \bm{q}\cdot
\bm{n} \,\mathrm{d}s, \\
-\int_{\Omega} v \, {\rm div}\bm{p} \,\mathrm{d}x &= -\int_\Omega f v
\,\mathrm{d}x. 
\end{aligned}
\right.
\end{equation}
In correspondence to the two variational formulations, two different
conforming finite element methods have been developed. The first one,
which approximates $u \in H_D^1(\Omega)$, can be traced back to the
1940s \cite{hrennikoff1941solution} and the Courant element
\cite{courant1943variational}.  After a decade, many works, such as
\cite{jones1964generalization, Feng1965finite,
fraeijs1965displacement, Yamamoto1966, zlamal1968finite,
pin1969variational, ciarlet1971multipoint, nicolaides1972class},
proposed more conforming elements and presented serious mathematical
proofs concerning error analysis and, hence, established the basic
theory of FEMs. These {\it primal FEMs} contain one unknown, namely
$u$, to solve.  The second one, which approximates $\bm{p} \in
H_N({\rm div};\Omega)$ and $u \in L^2(\Omega)$ based on a mixed
variational principal, is called the {\it mixed FEMs}
\cite{brezzi1974existence,raviart1977mixed,
nedelec1980mixed,arnold1985mixed,brezzi1991mixed,boffi2013mixed}.
These mixed methods solve two variables, namely flux variable $\bm p$
and $u$, and the condition for the well-posedness of mixed
formulations is known as inf-sup or the
Ladyzhenskaya-Babu\v{s}ka-Breezi (LBB) condition
\cite{brezzi1974existence}.

Contrary to the continuous Galerkin methods, the discontinuous
Galerkin (DG) methods, which can be traced back to the late 1960s
\cite{lions1968penalty, aubin1970approximation}, aim to relax the
conforming constraint on $u$ or $\bm{p}\cdot \bm{n}$.  To maintain
consistency of the DG discretization, additional finite element spaces
need to be introduced on the element boundaries.  In essence, the
numerical fluxes on the element boundaries were introduced explicitly
and therefore eliminated.  In most existing DG methods, only one such
boundary space is introduced as, for example, Lagrangian multiplier
space, either for $u$ as the primal DG methods
\cite{douglas1976interior,cockburn1998local,brezzi2000discontinuous}
or for $\bm{p}\cdot \bm{n}$ as the mixed DG methods
\cite{hong2017unified}.  Primal DG methods have been applied to purely
elliptic problems; examples include the interior penalty methods
studied in \cite{babuvska1973nonconforming, wheeler1978elliptic,
arnold1982interior} and the local DG method for elliptic problem in
\cite{cockburn1998local}.  Primal DG methods for diffusion and
elliptic problems were considered in \cite{brezzi1999discontinuous}.
A review of the development of DG methods up to 1999 can be found in
\cite{cockburn2000development}.  

Given a triangulation of $\Omega$, let $u_h \in V_h$ and $\bm{p}_h \in
\bm{Q}_h$ be discontinuous piecewise polynomial approximations of $u$
and $\bm p$, respectively.  In \cite{arnold2002unified}, Arnold,
Brezzi, Cockburn, and Marini unified the analysis of DG methods for
elliptic problems \eqref{pois} with $c=1$ and $\Gamma_D =
\partial\Omega$, which hinges on the unified formulation \cite[Equ.
(3.11)]{arnold2002unified} (Here, we change the notation
$\widehat{u}_h \mapsto \bar{u}_h$, $\sigma_h \mapsto - \bm{p}_h$ and
$\widehat{\sigma}_h \mapsto -\bar{\bm{p}}_h$, see also
\eqref{eq:DG-notation} for the DG notation):
\begin{equation} \label{eq:ABCM} 
\begin{aligned}
(\nabla_h u_h, \nabla_h v_h) 
& + \langle \llbracket \bar u_h - u_h \rrbracket , \{\nabla_h
v_h\}\rangle_{\mathcal{E}_h}
+ \langle \{\bar{\bm p}_h\}, \llbracket v_h \rrbracket
\rangle_{\mathcal{E}_h} \\
& + \langle \{\bar u_h - u_h\}, \llbracket\nabla_h v_h \rrbracket
\rangle_{\mathcal{E}_h^i} + \langle \llbracket\bar{\bm p}_h \rrbracket
, \{v_h\} \rangle_{\mathcal{E}_h^i} =(f, v_h) \qquad \forall v_h \in
V_h,
\end{aligned}
\end{equation}
where the numerical traces $\bar u_h$ and $\bar{\bm p}_h$ (i.e.,
$-\widehat{\sigma}_h$ in \cite{arnold2002unified}) are explicitly
given in \cite[Table 3.1]{arnold2002unified}. 

As a key step in our extended Galerkin analysis, we introduce two additional
residual corrections to the numerical traces $\bar{u}_h$ and $\bar{\bm
p}_h$ in \eqref{boundary}, which gain the flexibility of boundary
finite element spaces for both $u$ and $\bm{p}\cdot \bm{n}$.  More
specifically, in addition to the $\bar{u}_h$ and $\bar{\bm p}_h$ given
explicitly, our extended Galerkin analysis is presented in terms of four
discretization variables, namely 
$$
\bm{p}_h, \quad \check{p}_h, \quad u_h, \quad \check{u}_h.
$$
 The variables $\check{u}_h$ and $\check{p}_h$ are
introduced for the following approximation on element boundary 
$$
u \approx \bar{u}_h + \check u_h, \qquad
\bm{p}\cdot \bm{n}_e \approx \bar{\bm{p}}_h\cdot \bm{n}_e + \check p_h
, \quad e=K^+\cap K^-,
$$
which gives the following formulation by adopting the DG notation
\eqref{eq:DG-notation}, 
\begin{equation} \label{eq:arnold-mixed}
\left\{
\begin{aligned}
(c\bm{p}_h, \bm{q}_h) - (u_h, {\rm div}_h \bm{q}_h) + \langle
\bar{u}_h + \check{u}_h, [\bm{q}_h] \rangle_{\mathcal E_h} &= -\langle
g_D, \bm{q}_h \cdot \bm{n} \rangle_{\Gamma_D}
~~\quad \quad \qquad \forall \bm{q}_h \in \bm{Q}_h,\\
(\bm{p}_h, \nabla_h v_h) - \langle \bar{\bm{p}}_h\cdot \bm{n}_e +
\check{p}_h, [v_h]_e \rangle_{\mathcal E_h} &= -(f, v_h) + \langle
g_N, v_h \rangle_{\Gamma_N} 
\qquad \forall v_h \in V_h.
\end{aligned}
\right.
\end{equation}
As a direct consequence, the formulation \eqref{eq:arnold-mixed} is
equivalent to the formulation \cite[Equ.
(3.4)-(3.5)]{arnold2002unified} if we simply choose $\check{u}_h =
\check{p}_h = 0$ and $c=1$, which leads to \eqref{eq:ABCM} by eliminating
$\bm{p}_h$ (i.e., $-\sigma_h$ in \cite{arnold2002unified}).  As in
most DG methods, the Nitsche's trick (see
\eqref{up0} below) for $\check u_h$ and $\check p_h$ will be used.  In
this paper, we develop a concise {\it formulation} (see
\eqref{eq:XG} below) in terms of four variables $\bm{p}_h,
\check{p}_h, u_h, \check{u}_h$, which contain all the possible
variables in most of the existing FEMs.  Therefore, it has the
flexibility to unify the analysis of most of the existing FEMs: 

\begin{enumerate}
\item Under proper choices of the discrete spaces,  formulation \eqref{eq:XG}
recovers the analysis of $H^1$ conforming finite element if we
eliminate all the discretization variables except $u_h$. By
eliminating $\check{p}_h$, formulation \eqref{eq:XG} recovers some special cases
of the hybrid methods \cite{cockburn2004characterization} in which
$\check{u}_h$ serves as a Lagrange multiplier to force the continuity
of $\bm p\cdot \bm n$ across the element boundary. If we further
eliminate the Lagrange multiplier, the resulting system needs to solve
two variables $\bm{p}_h$ and $u_h$, which recovers the $H({\rm div})$
conforming mixed finite element method. 

\item The relationship between the formulation \eqref{eq:XG} and DG methods is
twofold.  First, by simply taking the trivial spaces for $\check u_h$
and $\check p_h$,  formulation \eqref{eq:XG} recovers most of DG methods shown in
\cite{arnold2002unified}. Second, if we confine to a special choice
$\bar{u}_h = \{u_h\}$ and $\bar{\bm{p}}_h = \{\bm{p}_h\}$, by virtue
of the characterization of hybridization and DG method
\cite{cockburn2004characterization}, formulation \eqref{eq:XG} can be related to
most of DG methods if we eliminate both $\check{p}_h$ and
$\check{u}_h$ (see Section \ref{subsec:DG}).

\item In Section \ref{subsec:HDG}, formulation \eqref{eq:XG} can be compared with
most hybridized discontinuous Galerkin (HDG) methods if we eliminate
$\check{p}_h$.  In 2009, a unified formulation of the hybridization of
discontinuous Galerkin, mixed, and continuous Galerkin methods for
second order elliptic problems was presented in
\cite{cockburn2009unified}.  The resulting system needs to solve three
variables, one approximating $u$, one approximating $p$, and the third
one approximating the trace of $u$ on the element boundary. A
projection-based error analysis of HDG methods was presented in
\cite{cockburn2010projection}, in which a projection operator was
tailored to obtain the $L^2$ error estimates for both potential and
flux. 
More references to the recent developments of HDG methods can be found
in \cite{cockburn2016static}. 

\item In Section \ref{subsec:WG}, formulation \eqref{eq:XG} can be compared with
most weak Galerkin (WG) methods if we eliminate $\check{u}_h$.  With
the introduction of weak gradient and weak divergence, a WG method for
a second-order elliptic equation formulated as a system of two
first-order linear equations was proposed and analyzed in
\cite{wang2013weak, wang2014weak}.  In fact, the weak Galerkin methods
in \cite{wang2014weak} also solve three variables, one approximating
$u$, one approximating $\bm p$, and the third one approximating the
flux $\bm p\cdot \bm n$ on the element boundary.  A summary of the
idea and applications of WG methods for various problems can be found
in \cite{wang2015weak}.  
\end{enumerate}


In addition, we study two types of uniform inf-sup conditions for the
proposed formulation in Section \ref{sec:well-posedness}, by which the
well-posedness of the formulation \eqref{eq:XG} follows naturally. With these
uniform inf-sup conditions, we obtain some limiting of formulation \eqref{eq:XG}
in Section \ref{sec:limiting}:
\begin{enumerate}
\item If the parameters in the Nitsche's trick are set to be $\tau =
(\rho h_e)^{-1}$, $\eta\cong \tau^{-1}$,  formulation \eqref{eq:XG} is shown to
converge to $H^1$ conforming method as $\rho \to 0$ under certain
conditions pertaining to the discrete spaces.
\item If the parameters in the Nitsche's trick are set to be $\eta =
(\rho h_e)^{-1}$, $\tau\cong \eta^{-1}$,  formulation \eqref{eq:XG} is shown to
converge to $H({\rm div})$ conforming method as $\rho \to 0$ under
certain conditions pertaining to the discrete spaces.
\end{enumerate}

Throughout this paper, we shall use letter $C$, which is independent
of mesh-size and stabilization parameters, to denote a generic
positive constant which may stand for different values at different
occurrences.  The notations $x \lesssim y$ and $x \gtrsim y$ mean $x
\leq Cy$  and $x \geq Cy$, respectively.

\section{Preliminaries} \label{sec:preliminaries}
Given $\Omega \subset \mathbb{R}^d$,  for any 
$D\subseteq \Omega$, and any positive integer
$m$, let $H^m(D)$ be the Sobolev space with the corresponding usual norm
and semi-norm, denoted by $\|\cdot\|_{m,D}$ and
$|\cdot|_{m,D}$, respectively.  The
$L^2$-inner product on $D$ and $\partial D$ are denoted by $(\cdot,
\cdot)_{D}$ and $\langle\cdot, \cdot\rangle_{\partial D}$,
respectively.  $\|\cdot\|_{0,D}$ and $\|\cdot\|_{0,\partial D}$ are
the norms of Lebesgue spaces $L^2(D)$ and $L^2(\partial D)$,
respectively.
We abbreviate $\|\cdot\|_{m,D}$ and
$|\cdot|_{m,D}$ by $\|\cdot\|_{m}$
and $|\cdot|_{m}$, respectively, when $D=\Omega$, 
and $\|\cdot\|_0=\|\cdot\|_{0,\Omega}$.

\subsection{DG notation}
We denote by $\{\mathcal{T}_h\}_h$ a family of
shape-regular triangulations of $\overline{\Omega}$. Let $h_K ={\rm
diam}(K)$ and $h = \max\{h_K: K\in \mathcal{T}_h\}$.  For any $K\in
\mathcal{T}_h$, denote $\bm{n}_K$ as the outward unit normal of $K$.
Denote by ${\cal E}_h$ the union of the boundaries of the elements $K$
of $\mathcal{T}_h$. 

Let ${\cal E}_h^i={\cal E}_h \setminus \partial\Omega$ be the set of
interior edges and ${\cal E}_h^\partial={\cal E}_h\setminus{\cal
E}_h^i$ be the set of boundary edges. Further, for any $e\in \mathcal
E_h$, let $h_e ={\rm diam}(e)$. For $e\in {\cal E}_h^i$, we select a
fixed normal unit direction, denoted by $\bm{n}_e$. For $e\in {\cal
E}_h^\partial$, we specify the unit outward normal of $\Omega$ as $\bm{n}_e$.  Let
$e$ be the common edge of two elements $K^+$ and $K^-$, and let
$\bm{n}^i$ = $\bm{n}|_{\partial K^i}$ be the unit outward normal
vector on $\partial K^i$ with $i = +,-$.  For any scalar-valued
function $v$ and vector-valued function $\bm{q}$, let $v^{\pm}$ =
$v|_{\partial K^{\pm}}$, $\bm{q}^{\pm}$ = $\bm{q}|_{\partial
K^{\pm}}$.  Then, we define averages $\{\cdot\}$, $\{\!\!\{ \cdot
\}\!\!\}$, $\{\cdot \}_e$ and jumps $\llbracket \cdot \rrbracket$,
$[\cdot]_e$, $[\cdot]$ as follows:
\begin{equation} \label{eq:DG-notation}
\begin{aligned}
&\{v\} = \frac{1}{2}(v^+ + v^-),\qquad \{\!\!\{\bm{q}\}\!\!\} =
\frac{1}{2}(\bm{q}^+ + \bm{q}^-), \qquad \{\bm
q\}_e = \frac{1}{2}(\bm{q}^+ + \bm{q}^-)\cdot \bm{n}_e ~~~\quad &{\rm
on}\ e\in {\cal E}_h^i,\\
&\llbracket v \rrbracket  = v^+\bm{n}^+ + v^-\bm{n}^- ,\qquad [v]_e =
\llbracket v \rrbracket\cdot \bm{n}_e,
\qquad[\bm{q}] = \bm{q}^+\cdot \bm{n}^+ + \bm{q}^-\cdot\bm{n}^-
\quad &{\rm on}\ e\in {\cal E}_h^i,\\
&\llbracket v \rrbracket = v \bm{n}, \qquad[v]_e = v, 
\qquad  \{v\} = v,  \qquad \{\!\!\{\bm{q}\}\!\!\} = \bm{q},\qquad
\{\bm q\}_e=\bm q\cdot \bm n, \qquad [\bm q]=0 \qquad &{\rm on}\ e \in
\Gamma_D,\\
&\llbracket v \rrbracket = \bm 0, \qquad[v]_e = 0, 
\qquad  \{v\} = v,  
\qquad \{\!\!\{\bm{q}\}\!\!\} =\bm{q},\qquad  \{\bm q\}_e = \bm q\cdot
\bm n, \qquad [\bm q]=\bm q\cdot \bm n \qquad &{\rm on}\ e \in
\Gamma_N.
\end{aligned}
\end{equation}
The notation follows the
rules: (i) $\{\!\!\{ \cdot \}\!\!\}$ and $\llbracket \cdot
\rrbracket$ are vector-valued operators; (ii) $\{\cdot\}$, $[\cdot]$,
$\{\cdot\}_e$ and $[\cdot]_e$ are scalar-valued operators; (iii)
$\{\cdot\}_e$ and $[\cdot]_e$ are orientation-dependent operators.
Clearly, $\{\!\!\{\bm{q}\}\!\!\} \cdot \llbracket v \rrbracket =
\{\bm{q}\}_e [v]_e$.

For simplicity of exposition, we use the following convention:
\begin{equation} \label{equ:inner-product}
(\cdot,\cdot):=\sum_{K\in \mathcal
T_h}(\cdot,\cdot)_{K}, \qquad \langle\cdot,\cdot\rangle:=\sum_{e\in \mathcal E_h}\langle\cdot,\cdot\rangle_{e},
\qquad \langle\cdot,\cdot\rangle_{\partial\mathcal T_h }:=\sum_{K\in
\mathcal T_h}\langle\cdot,\cdot\rangle_{\partial K}.
\end{equation}
We now give more details about the last
inner product. For any scalar-valued function $v$ and vector-valued
function $\bm q$, we denote
$$
\langle v, \bm  q \cdot \bm n \rangle_{\partial
\mathcal T_h}
:=\sum_{K\in \mathcal T_h}\langle v,
\bm q\cdot \bm n_K\rangle_{\partial K}.
$$
Here, we specify the outward unit normal direction $\bm n$
corresponding to the element $K$, namely $\bm n_K$.  In
addition, let $\nabla_h$ and ${\rm div}_h$ be defined as
$$
\nabla_hv|_K:=\nabla v|_K, \quad
{\rm div}_h\bm{q}|_K:={\rm div} \bm{q}|_K \qquad \forall K \in
\mathcal{T}_h.
$$

\begin{lemma}
With the averages and jumps defined in
\eqref{eq:DG-notation}, we have the following identities
\cite{arnold2002unified}:
\begin{align}
(v,{\rm div}_h \bm q)+(\nabla_h v, \bm q) &= 
\langle v, \bm q\cdot \bm n\rangle_{\partial \mathcal T_h}
=\langle \{\!\!\{\bm q\}\!\!\}, \llbracket v
\rrbracket\rangle+ \langle [\bm
q],\{v\}\rangle 
= \langle \{\bm q\}_e, [v]_e \rangle+ \langle [\bm
q],\{v\}\rangle, \label{eq:dg-identity} \\
\langle u_h, v_h\rangle_{\partial \mathcal T_h} &= 
2\langle \{u_h\},\{v_h\}\rangle+ \frac{1}{2}\langle
\llbracket u_h \rrbracket,\llbracket v_h \rrbracket\rangle= 
2\langle \{u_h\},\{v_h\}\rangle+ \frac{1}{2}\langle
[u_h]_e,[v_h]_e\rangle.  \label{eq:dg-identity_2}
\end{align}
\end{lemma}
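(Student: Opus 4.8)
The plan is to establish the two identities by direct computation, working edge-by-edge over the mesh. The key observation is that both identities are purely local: the left-hand side $\langle v, \bm q\cdot\bm n\rangle_{\partial\mathcal T_h}$ sums over element boundaries $\partial K$, and each interior edge $e = K^+\cap K^-$ is counted exactly twice (once from each side), while each boundary edge is counted once. So I would first reorganize the sum $\sum_{K}\langle\cdot,\cdot\rangle_{\partial K}$ into $\sum_{e\in\mathcal E_h^i}(\cdots) + \sum_{e\in\mathcal E_h^\partial}(\cdots)$, then verify the algebraic identity on a single generic interior edge.

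For the first identity \eqref{eq:dg-identity}, the leftmost equality $(v,{\rm div}_h\bm q)+(\nabla_h v,\bm q) = \langle v,\bm q\cdot\bm n\rangle_{\partial\mathcal T_h}$ is just the divergence theorem applied element by element: on each $K$, $\int_K(v\,{\rm div}\,\bm q + \nabla v\cdot\bm q)\,\mathrm dx = \int_{\partial K} v\,\bm q\cdot\bm n_K\,\mathrm ds$, and summing gives the claim with $\nabla_h,{\rm div}_h$ in place of $\nabla,{\rm div}$. For the middle equality, on an interior edge $e$ the contribution from $\partial K^\pm$ is $v^+\bm q^+\cdot\bm n^+ + v^-\bm q^-\cdot\bm n^-$, and I would expand the right-hand side $\{\!\!\{\bm q\}\!\!\}\cdot\llbracket v\rrbracket + [\bm q]\{v\}$ using the definitions in \eqref{eq:DG-notation} and check the two sides agree; the cross terms cancel precisely because $\bm n^+ = -\bm n^-$. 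The final equality in \eqref{eq:dg-identity} follows from the already-noted remark that $\{\!\!\{\bm q\}\!\!\}\cdot\llbracket v\rrbracket = \{\bm q\}_e[v]_e$. The boundary-edge contributions must be checked separately against the $\Gamma_D$ and $\Gamma_N$ rows of \eqref{eq:DG-notation}, but in both cases $[\bm q]$ or $\llbracket v\rrbracket$ is arranged so the bookkeeping matches; on $\Gamma_D$ one has $[\bm q]=0$ so only the $\{\!\!\{\bm q\}\!\!\}\cdot\llbracket v\rrbracket$ term survives, while on $\Gamma_N$ one has $\{v\}=v$, $[\bm q]=\bm q\cdot\bm n$ and $\llbracket v\rrbracket = \bm 0$.

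For the second identity \eqref{eq:dg-identity_2}, I would again work on a single interior edge: the left side contributes $u_h^+v_h^+ + u_h^-v_h^-$ (the normals do not appear since this is a scalar product), and expanding $2\{u_h\}\{v_h\} + \tfrac12\llbracket u_h\rrbracket\cdot\llbracket v_h\rrbracket = 2\cdot\tfrac14(u_h^++u_h^-)(v_h^++v_h^-) + \tfrac12(u_h^+v_h^+ + u_h^-v_h^-)$ using $\bm n^+\cdot\bm n^+ = \bm n^-\cdot\bm n^- = 1$, $\bm n^+\cdot\bm n^- = -1$, one checks the cross terms $u_h^+v_h^- + u_h^-v_h^+$ cancel and the diagonal terms sum to $u_h^+v_h^+ + u_h^-v_h^-$. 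The identity $\llbracket u_h\rrbracket\cdot\llbracket v_h\rrbracket = [u_h]_e[v_h]_e$ again gives the last equality. The boundary edges contribute $u_hv_h$ on both sides directly. None of this is hard — these are standard DG algebra identities and the paper even cites \cite{arnold2002unified} for them — so I would keep the proof short, presenting the single-edge computation for the nontrivial middle equality of each line and remarking that summation over all edges and the treatment of $\Gamma_D,\Gamma_N$ are immediate from \eqref{eq:DG-notation}. The only point requiring any care is the orientation bookkeeping at boundary edges, ensuring the conventions in the last two rows of \eqref{eq:DG-notation} are consistent with the divergence-theorem sign on $\partial\Omega$.
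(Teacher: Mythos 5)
Your proof follows essentially the same route as the paper's: integration by parts element-by-element, the single-edge algebraic identity $\bm q^+\cdot\bm n^+v^+ + \bm q^-\cdot\bm n^-v^- = \{\!\!\{\bm q\}\!\!\}\cdot\llbracket v\rrbracket + [\bm q]\{v\}$ verified by expanding the definitions and using $\bm n^+=-\bm n^-$, a separate check against the $\Gamma_D$ and $\Gamma_N$ rows of \eqref{eq:DG-notation}, and a direct single-edge computation for the second identity. The only blemish is your displayed expansion of $\tfrac12\llbracket u_h\rrbracket\cdot\llbracket v_h\rrbracket$, which should read $\tfrac12\left(u_h^+v_h^+ + u_h^-v_h^- - u_h^+v_h^- - u_h^-v_h^+\right)$ rather than $\tfrac12\left(u_h^+v_h^+ + u_h^-v_h^-\right)$; your subsequent remark about the cross terms cancelling shows you intended the correct computation, so this is a typographical slip rather than a gap.
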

\begin{proof}
On each $e=\partial K^+\cap \partial K^-$, the following identity can
be verified by a direct calculation:
\begin{equation} \label{DG-identity}
\bm{q}^+\cdot \bm{n}^+v^+ + \bm{q}^-\cdot\bm{n}^- v^- =
\{\!\!\{\bm{q}\}\!\!\} \cdot \llbracket v \rrbracket+[\bm{q}]\{v\}.
\end{equation}
Consequently, by the averages and jumps defined on $\Gamma_D$ and
$\Gamma_N$ in \eqref{eq:DG-notation}, we have 
\begin{equation}\label{DG-identity-2}
\langle v, \bm q\cdot \bm n\rangle_{\partial \mathcal T_h} =\langle
\llbracket v \rrbracket, \{\!\!\{\bm{q}\}\!\!\} \rangle+ \langle
\{v\}, [\bm q] \rangle.
\end{equation}
By integrating by parts and  \eqref{DG-identity-2}, we have identity
\eqref{eq:dg-identity}.  Identity \eqref{eq:dg-identity_2} can be
obtained by a direct calculation. 
\end{proof}
\paragraph{DG finite element spaces.}
Before discussing various Galerkin methods, we need to introduce the
finite element spaces associated with the triangulation
$\mathcal{T}_h$.  First, $V_h$ and $\bm{Q}_h$ are the piecewise scalar
and vector-valued discrete spaces on the triangulation
$\mathcal{T}_h$, respectively and for $k\ge 0$, we define the spaces
as follows
\begin{equation}\label{Spaces}
\begin{aligned}
V^{k}_h&:=\{v_h\in L^2(\Omega): v_h|_{K}\in \mathcal{P}_k(K), \forall
K\in \mathcal T_h \},\\
\bm Q^k_h&:= \{\bm p_h\in \bm L^2(\Omega): \bm p_h|_K\in \bm
{\mathcal{P}}_k(K), \forall K\in \mathcal T_h \},\\
\bm Q^{k,RT}_h &:= \{\bm p_h\in \bm L^2(\Omega): \bm p_h|_K\in \bm
{\mathcal{P}}_k(K)+\bm x \mathcal{P}_k(K), \forall K\in \mathcal T_h \},
\end{aligned}
\end{equation}
where $\mathcal{P}_k(K)$ is the space of polynomial functions of
degree at most $k$ on $K$, and $\bm{\mathcal P}_k(K) := [\mathcal P_k(K)]^d$.   

Second, $\check{V}_h$ and $\check{Q}_h$ are the piecewise
scalar-valued discrete spaces on ${\cal E}_h$, respectively and for
$k\ge 0$, we define the spaces as follows
\begin{equation}\label{Edge:spaces}
\begin{aligned}
{\check Q}^k_h&:=\{{\check p}_h\in L^2(\mathcal E_h): {\check
p}_h|_e\in \mathcal{P}_k(e), \forall e\in \mathcal{E}^i_h,{\check
p}_h|_{\Gamma_N}=0\}, \\
{\check V}^k_h&:=\{{\check v}_h \in L^2(\mathcal E_h): {\check
v}_h|_e\in \mathcal{P}_k(e), \forall e\in \mathcal{E}^i_h, {\check
  v}_h|_{\Gamma_D}=0\},
\end{aligned}
\end{equation}
where $\mathcal{P}_k(e)$ is the space of polynomial functions of
degree at most $k$ on $e$. Further, let $\check Q(e), \check V(e)$
denote some local spaces on $e$ which will be specified at their
occurrences.

\section{A Unified Four Field Formulation}\label{SXG:formulation}
We start with equation \eqref{H1}, namely
\begin{equation}\label{H11}
\left\{
\begin{aligned}
c \bm{p} +\nabla u &= 0 \,\qquad  {\rm in}\ \Omega, \\ 
-{\rm div} \bm{p} &=-f  ~\quad {\rm in}\ \Omega. \\
\end{aligned}
\right.
\end{equation}
Multiplying the first and second equations by $\bm q_h\in \bm Q_h$ and
$v_h\in V_h$, and summing on all $K\in \mathcal T_h$, we get 
\begin{equation*} 
\left\{
\begin{aligned}
(c\bm p, \bm q_h) +(\nabla u, \bm q_h)&= 0 \qquad \qquad\quad \forall
\bm q_h\in \bm Q_h,\\
-({\rm div} \bm p, v_h) &=-(f, v_h)~\qquad \forall
v_h\in V_h.
\end{aligned}
\right.
\end{equation*}
Using the identity \eqref{eq:dg-identity}, we have 
\begin{equation*} 
\left\{
\begin{aligned}
(c\bm p, \bm q_h) -(u, {\rm div}_h\bm q_h) + \langle u, [\bm
q_h]\rangle + \langle [u]_e, \{\bm
q_h\}_e\rangle &= 0 \qquad \qquad\quad \forall \bm
q_h\in \bm Q_h,\\
(\bm p, \nabla_h v_h) -
\langle \bm{p}\cdot \bm{n}_e, [v_h]_e \rangle
-\langle [\bm p], \{v_h\}\rangle
&=-(f, v_h)~\qquad \forall
v_h\in V_h.
\end{aligned}
\right.
\end{equation*}
Noting that $u\in H^1(\Omega), \bm p\in \bm{H}({\rm div},\Omega)$,
$u=g_D$ on $\Gamma_D$ and $\bm p\cdot \bm n=g_N$ on $\Gamma_N$, we
obtain 
\begin{equation} 
\left\{
\begin{aligned}
\displaystyle (c\boldsymbol p, \boldsymbol q_h)
-(u, {\rm div}_h\boldsymbol q_h)+
\langle u, [\bm q_h]\rangle
&= - \langle g_D, \bm q_h\cdot \bm n\rangle_{\Gamma_D}  
\qquad\qquad 
\forall \boldsymbol q_h\in \boldsymbol Q_h,\\
\displaystyle (\boldsymbol p, \nabla_h v_h)-
\langle \bm{p}\cdot \bm{n}_e, [v_h]_e\rangle&=-(f, v_h)
+ \langle g_N, v_h\rangle_{\Gamma_N}~~\quad \forall
v_h\in V_h.
\end{aligned}
\right.
\end{equation}

\paragraph{The unified formulation.}
It is natural to approximate $u, \bm p$ on the interior of the elements of $\mathcal T_h$ by 
\begin{equation} \label{eq:1}
\quad u\approx u_h,\quad  \boldsymbol p\approx \boldsymbol p_h, 
\end{equation}
for $u_h\in V_h$ and $\bm p_h\in \bm Q_h$. 
Our key observation is that most DG methods can be obtained by
approximating $u$ and $\bm{p}\cdot \bm{n}_e$ on $\mathcal E_h$ by
\begin{equation} \label{boundary}
u \approx\bar u_h(u_h)+\check u_h, \quad
\bm{p}\cdot \bm{n}_e\approx \bar p_h(u_h, \bm p_h)+\check p_h,
\end{equation}
where $\bar u_h(u_h)$, $\bar p_h(u_h, \bm p_h)$ are given in terms of
$u_h, \bm p_h$ as shown in \cite[Table 3.1]{arnold2002unified} (by changing the notation $\widehat{\sigma}_h \cdot \bm{n}_e \mapsto -\bar{p}_h$) 
and $\check u_h\in \check V_h$, $\check p_h\in \check Q_h$ are
some {\it residual corrections} to $\bar u_h(u_h), \bar p_h(u_h, \bm{p}_h)$,
respectively. 
As a result, we obtain
\begin{equation} \label{eq:symm-DG}
\left\{
\begin{aligned}
(c\boldsymbol p_h, \boldsymbol q_h) -(u_h, {\rm div}_h\boldsymbol
q_h)+ \langle \bar u_h(u_h)+\check u_h, [\bm q_h]\rangle &=
-\langle g_D, \bm q_h\cdot
\bm n\rangle_{\Gamma_D} 
\qquad\qquad
\forall \boldsymbol q_h\in \boldsymbol Q_h,\\
(\boldsymbol p_h, \nabla_h v_h)- \langle \bar p_h(u_h,\bm p_h)+\check
p_h, [v_h]_e\rangle &= -(f, v_h)+\langle g_N,
v_h\rangle_{\Gamma_N}~~\quad \forall v_h\in V_h.
\end{aligned}
\right.
\end{equation}
Besides \eqref{eq:symm-DG}, two additional equations are required to
determine $\check p_h$ and $\check u_h$.  On the interior edges, we
adopt 
$$ 
\check p_h\approx \tau [u_h]_e, \quad \check u_h\approx \eta [\bm p_h].
$$
More specifically, 
\begin{subequations}
\label{up0}
\begin{align}
\langle [u_h]_e -\tau^{-1} \check
p_h,\check{q}_h\rangle_{\mathcal{E}^i_h} &= 0  \qquad \forall
\check{q}_h \in \check{Q}_h,\\
\langle [\bm p_h]-\eta^{-1} \check
u_h,\check{v}_h\rangle_{\mathcal{E}^i_h} &= 0 \qquad \forall
\check{v}_h \in \check{V}_h.  
\end{align}
\end{subequations}
On the boundary edges, we naturally adopt 
$$
\check p_h\approx 
\left\{
\begin{array}{ll}
\tau(u_h-g_D) &  \hbox{on}~~\Gamma_D, \\
0 & \hbox{on}~~\Gamma_N,
\end{array}
\right.
\qquad \check u_h\approx 
\left\{
\begin{array}{ll}
0 & \hbox{on}~~\Gamma_D, \\
\eta(\bm p_h\cdot \bm n-g_N)& \hbox{on}~~\Gamma_N,
\end{array}
\right.
$$
namely 
\begin{subequations} \label{upB}
\begin{align}
-\langle u_h- \tau^{-1} \check
p_h,\check{q}_h\rangle_{\mathcal E_h^\partial} &= -\langle
g_D,\check{q}_h\rangle_{\Gamma_D}  \qquad \forall \check{q}_h \in
\check{Q}_h, \\
\langle \bm p_h\cdot \bm n- \eta^{-1} \check
u_h,\check{v}_h\rangle_{\mathcal E_h^\partial} &= \langle
g_N,\check{v}_h\rangle_{\Gamma_N} ~~\qquad \forall \check{v}_h \in
\check{V}_h.
\end{align}
\end{subequations}
Collectively, we obtain a concise formulation of \eqref{up0}-\eqref{upB} as follows
\begin{subequations} \label{eq:boundary}
\begin{align}
-\langle [u_h]_e-\tau^{-1} \check
p_h,\check{q}_h\rangle &= -\langle
g_D,\check{q}_h\rangle_{\Gamma_D}  \qquad \forall \check{q}_h \in
\check{Q}_h, \label{eq:XG3}\\
\langle [\bm p_h]- \eta^{-1} \check
u_h,\check{v}_h\rangle &= \langle
g_N,\check{v}_h\rangle_{\Gamma_N} ~~\qquad \forall \check{v}_h \in
\check{V}_h.\label{eq:XG4}
\end{align}
\end{subequations}
The combination of \eqref{eq:symm-DG} and \eqref{eq:boundary} obtains formulation: Find $(\bm p_h, \check
p_h, u_h, \check
u_h)\in \bm Q_h\times\check Q_h\times V_h\times \check V_h$ such
that for any $(\bm q_h, \check q_h,v_h,  \check v_h)\in \bm
Q_h\times\check Q_h\times V_h\times \check V_h$
\begin{equation} \label{eq:symm-ABCMDG}
\left\{
\begin{aligned}
(c\boldsymbol p_h, \boldsymbol q_h) -(u_h, {\rm div}_h\boldsymbol
q_h)+ \langle \bar u_h(u_h)+\check u_h, [\bm q_h]\rangle &=
-\langle g_D, \bm q_h\cdot
\bm n\rangle_{\Gamma_D} 
\qquad\qquad
\forall \boldsymbol q_h\in \boldsymbol Q_h,\\
(\boldsymbol p_h, \nabla_h v_h)- \langle \bar p_h(u_h,\bm p_h)+\check
p_h, [v_h]_e\rangle &= -(f, v_h)+\langle g_N,
v_h\rangle_{\Gamma_N}~\,\quad \forall v_h\in V_h,\\
-\langle [u_h]_e-\tau^{-1} \check
p_h,\check{q}_h\rangle &= -\langle
g_D,\check{q}_h\rangle_{\Gamma_D}  ~~\qquad\qquad\quad \forall \check{q}_h \in
\check{Q}_h, \\
\langle [\bm p_h]- \eta^{-1} \check
u_h,\check{v}_h\rangle &= \langle
g_N,\check{v}_h\rangle_{\Gamma_N} ~\,\qquad\qquad\qquad \forall \check{v}_h \in
\check{V}_h.
\end{aligned}
\right. 
\end{equation}
We point out here that if $\check Q_h=\{0\}, \check V_h=\{0\}$, then
the above method \eqref{eq:symm-ABCMDG} induce to the consistent
methods listed in \cite[Table 3.1]{arnold2002unified}.

\paragraph{Compact form for a special case.} In what follows, in this
paper, we consider a special case: $\bar u_h(u_h)=\{u_h\}$ and $\bar
p_h(u_h,\bm p_h)=\{\bm p_h\}_e$. In this case, the formulation
\eqref{eq:symm-ABCMDG} can be recast into the following compact form:
 Find $(\bm p_h, \check p_h, u_h, \check
u_h)\in \bm Q_h\times\check Q_h\times V_h\times \check V_h$ such
that for any $(\bm q_h, \check q_h,v_h,  \check v_h)\in \bm
Q_h\times\check Q_h\times V_h\times \check V_h$
\begin{equation} \label{eq:XG} 
\left\{
\begin{aligned}
a(\tilde{\bm p}_h, \tilde{\bm q}_h) + b(\tilde{\bm
q}_h, \tilde{u}_h) &= -\langle g_D, \bm q_h\cdot\bm n+\check
q_h\rangle_{\Gamma_D} \qquad \qquad\quad \forall \tilde{\bm q}_h \in
\tilde{\bm Q}_h:=\bm{Q}_h \times \check{Q}_h, \\
b(\tilde{\bm p}_h, \tilde{v}_h) - c(\tilde{u}_h,
\tilde{v}_h)
&= -(f, v_h)+\langle g_N,v_h+\check{v}_h\rangle_{\Gamma_N}  ~\qquad
\forall \tilde{v}_h \in \tilde{V}_h:=V_h \times \check{V}_h, 
\end{aligned}
\right.
\end{equation} 
where $\tilde{\bm{p}}_h := (\bm{p}_h, \check{p}_h)$, $\tilde{u}_h :=
(u_h, \check{u}_h)$ and 
\begin{subequations} 
\begin{align}
a(\tilde{\bm p}_h, \tilde{\bm q}_h) &:=
(c\bm{p}_h, \bm{q}_h) + \langle\tau^{-1}\check p_h, \check q_h
\rangle, \label{eq:XG-forms-a} \\
 c(\tilde{u}_h, \tilde{v}_h) &:= \langle \eta^{-1}\check u_h, \check v_h\rangle, \label{eq:XG-forms-c}\\
 b(\tilde{\bm q}_h, \tilde{u}_h) &:= (\nabla_h u_h, \bm{q}_h)
-\langle[u_h]_e, \{\bm q_h\}_e\rangle+\langle\check u_h, [\bm q_h]\rangle - \langle
 [u_h]_e, \check{q}_h \rangle, \label{eq:XG-forms-b1}\\
&:=-(u_h, {\rm div}_h \bm q_h)+\langle\{u_h\}, [\bm q_h]\rangle+\langle\check u_h, [\bm q_h]\rangle - \langle
 [u_h]_e, \check{q}_h \rangle, \label{eq:XG-forms-b2}
\end{align}
\end{subequations}
where \eqref{eq:dg-identity} is used to rewrite the bilinear form $
b(\tilde{\bm q}_h, \tilde{u}_h)$.
\begin{remark}
We note that if $(\bm p,  u)$ is the solution of \eqref{H1}, then
$(\bm p, 0; u, 0)$ satisfies the equations \eqref{eq:XG}. Namely, the
formulation \eqref{eq:XG} is consistent. 
\end{remark}
Let 
\begin{equation} \label{eq:big-bilinear}
\tilde{a}((\tilde {\bm p}_h, \tilde u_h), (\tilde{\bm q}_h,
\tilde v_h)) := a(\tilde{\bm p}_h, \tilde{\bm q}_h) + b(\tilde{\bm q}_h, \tilde{u}_h) +  b(\tilde{\bm p}_h,
\tilde{v}_h) - c(\tilde{u}_h, \tilde{v}_h).
\end{equation}
Motivated by the two formulations of $b(\tilde{\bm q}_h,
\tilde{u}_h)$ in \eqref{eq:XG-forms-b1} and \eqref{eq:XG-forms-b2}, we
have two types of inf-sup conditions for the formulation \eqref{eq:XG}, which will be
discussed in next section.

\section{Unified Analysis of the Four Filed Formulation} \label{sec:well-posedness}
In this section, we shall present two types of the inf-sup condition
for the formulation \eqref{eq:XG}.   
\subsection{Gradient-based uniform inf-sup condition}
\label{subsec:grad}

Let us consider the well-posedness of formulation \eqref{eq:XG} in
the gradient-based case.  For any $\bm p_h\in \bm Q_h, \check p_h\in
\check Q_h, u_h\in V_h, \check u_h\in \check V_h$, define
\begin{equation}\label{eq:grad-norms}
\begin{aligned}
\|\tilde{\bm p}_h\|^2_{0,\rho_h}&:= \underbrace{(c
\bm{p}_h,\bm{p}_h)}_{\|{\bm p}_h\|^2_{0,c}} + \underbrace{
\langle \rho h_e\check p_h, \check p_h\rangle}_ {\|\check
p_h\|_{0,\rho_h}^2}, \\
\|\tilde u_h\|^2_{1,\rho_h}& := \underbrace{
(\nabla_h u_h,\nabla_h u_h) + \langle \rho^{-1} h^{-1}_e\check
{\mathcal Q}_h^p [u_h]_e, \check {\mathcal Q}_h^p
[u_h]_e\rangle}_{\|u_h\|^2_{1,\rho_h}} 
+ \underbrace{ \langle  \rho^{-1} h^{-1}_e\check u_h, \check
u_h\rangle}_{\|\check u_h\|^2_{0,\rho_h^{-1}}},
\end{aligned}
\end{equation}
where $\check{\mathcal{Q}}_h^{p}: L^2(\mathcal E_h)\rightarrow
\check{Q}_h$ and $\check{\mathcal{Q}}_h^u: L^2(\mathcal E_h)
\rightarrow \check{V}_h$ are the $L^2$ projections. Here, we
abbreviate the dependence of both $\rho$ and mesh size $h$ in the
norms to $\rho_h$.

We are now ready to state the first main result. 

\begin{theorem} \label{thm:grad-infsup}
If we choose $\tau = (\rho h_e)^{-1}, \eta\cong\tau^{-1} = \rho h_e$ in
 formulation \eqref{eq:XG} and the spaces $ \bm Q_h,
\check{Q}_h, V_h$ satisfy the conditions:
\begin{enumerate}
\item [(a)] $\check{Q}_h$ contains piecewise constant function space;
\item [(b)] $\nabla_h V_h \subset \bm Q_h$;
\item [(c)] $\{\nabla_h V_h\}_e \subset \check{Q}_h$. 
\end{enumerate}
Then we have:
\begin{enumerate}
\item  There exists $\rho_0 > 0$ such that $\tilde{a}((\cdot,\cdot),(\cdot,\cdot))$ in \eqref{eq:big-bilinear} is uniformly well-posed with respect
to the norms $\|\cdot\|_{0,\rho_h}$, $\|\cdot\|_{1,\rho_h}$ when $\rho
\in (0, \rho_0]$ and the following estimates holds:
\begin{equation}\label{eq:grad-stability}
\|\bm{p}_h\|_{0,c}+\|\check p_h\|_{0,\rho_h}+\|u_h\|_{1,\rho_h}+\|\check
u_h\|_{0,\rho_h^{-1}} 
 \lesssim \|f\|_{-1,\rho_h} + \|g_D\|_{\frac{1}{2},\rho_h,
   \Gamma_D} + \|g_N\|_{-\frac{1}{2}, \rho_h, \Gamma_N},
\end{equation} 
where 
$$ 
\begin{aligned}
\|f\|_{-1,\rho_h}&:= \sup\limits_{v_h\in V_h \setminus\{0\}} \frac{(f,
  v_h)}{\|v_h\|_{1,\rho_h}}, \\
\|g_D\|_{\frac{1}{2},\rho_h,\Gamma_D} &:= \sup\limits_{\bm q_h\in \bm
Q_h \setminus \{\bm{0} \}} \frac{(g_D, \bm q_h\cdot \bm n)_{\Gamma_D}}{\|\bm
q_h\|_{0,c}} + 
\sup\limits_{\check q_h\in \check Q_h \setminus\{0\}} \frac{(g_D,
  \check q_h)_{\Gamma_D}}{\|\check q_h\|_{0,\rho_h}}, \\
\|g_N\|_{-\frac{1}{2},\rho_h,\Gamma_N} &:= \sup\limits_{v_h\in V_h
\setminus \{0\}} \frac{(g_N, v_h)_{\Gamma_N}}{\|v_h\|_{1, \rho_h}}
+ \sup\limits_{\check v_h\in \check V_h \setminus \{0\}} \frac{(g_N,
  \check v_h)_{\Gamma_N}}{\|\check v_h\|_{0,\rho_h^{-1}}}.
\end{aligned}
$$ 
\item Let $(\bm p,  u)\in \bm {L}^2(\Omega)\times
H^1(\Omega) $ be the solution of \eqref{H1} and $(\tilde{\bm p}_h,
\tilde{u}_h)\in \tilde{\bm Q}_h\times \tilde{V}_h$ be the solution of
\eqref{eq:XG}, we have the quasi-optimal approximation as follows: 
\begin{equation} \label{eq:grad-quasi}
\|\bm p-\bm{ p}_h\|_{0,c} + \|\check p_h\|_{0,\rho_h} + \|u-
u_h\|_{1,\rho_h}+\|\check u_h\|_{0,\rho_h^{-1}}\lesssim \inf\limits_{\bm{
q}_h\in \bm Q_h, v_h\in
V_h}\left(\|\bm p-\bm{ q}_h\|_{0,c}+\|u- v_h\|_{1,\rho_h}\right).
\end{equation}
\item If $\bm p\in \bm {H}^{k+1}(\Omega), u\in H^{k+2}(\Omega)$ $
(k\ge 0)$ and we choose the spaces $\bm Q_h\times \check Q_h\times
V_h\times \check V_h=\bm Q_h^{k}\times \check Q_h^k\times
V_h^{k+1}\times\check V_h$ for any $\check V_h$, then we have the
error estimate
\begin{equation} \label{eq:grad-estimate}
\|\bm p-\bm{p}_h\|_{0,c} + \|\check p_h\|_{0,\rho_h}+ \|u-
 u_h\|_{1,\rho_h}+\|\check u_h\|_{0,\rho_h^{-1}}\lesssim h^{k+1} (|\bm
   p|_{k+1}+|u|_{k+2}).
\end{equation}
\end{enumerate}
\end{theorem}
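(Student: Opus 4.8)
The plan is to establish the three conclusions in sequence, deriving the stability estimate \eqref{eq:grad-stability} from a uniform inf-sup (Babu\v{s}ka--Brezzi) condition for the bilinear form $\tilde a(\cdot,\cdot)$, then obtaining \eqref{eq:grad-quasi} by the standard consistency-plus-stability argument, and finally \eqref{eq:grad-estimate} by inserting suitable interpolants into \eqref{eq:grad-quasi} and invoking approximation properties of $\bm Q_h^k$ and $V_h^{k+1}$. The heart of the matter is the first item: proving that $\tilde a$ satisfies a uniform (in $\rho$, $h$, and all penalization parameters) inf-sup condition on $(\tilde{\bm Q}_h\times\tilde V_h)$ equipped with the norms $\|\cdot\|_{0,\rho_h}$ and $\|\cdot\|_{1,\rho_h}$ from \eqref{eq:grad-norms}.

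\textbf{Inf-sup for $\tilde a$.} I would verify the three Brezzi conditions for the saddle-point form, reading $a(\cdot,\cdot)$ as the $(\bm p,\check p)$-block, $c(\cdot,\cdot)$ as the (nonnegative) $(u,\check u)$-block, and $b(\cdot,\cdot)$ as the coupling. Coercivity of $a$ on its kernel is immediate from the definitions \eqref{eq:XG-forms-a} and the choice $\tau=(\rho h_e)^{-1}$: $a(\tilde{\bm p}_h,\tilde{\bm p}_h)=\|{\bm p}_h\|_{0,c}^2+\langle\tau^{-1}\check p_h,\check p_h\rangle=\|\tilde{\bm p}_h\|_{0,\rho_h}^2$, so $a$ is in fact coercive on all of $\tilde{\bm Q}_h$, which simplifies life considerably. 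The substantive point is the inf-sup condition for $b$: given $\tilde u_h=(u_h,\check u_h)$ I must exhibit $\tilde{\bm q}_h=(\bm q_h,\check q_h)$ with $b(\tilde{\bm q}_h,\tilde u_h)\gtrsim\|\tilde u_h\|_{1,\rho_h}\|\tilde{\bm q}_h\|_{0,\rho_h}$. Here is where hypotheses (a)--(c) enter. Using form \eqref{eq:XG-forms-b1}, the choice $\bm q_h=\nabla_h u_h$ (legal by (b)) yields $(\nabla_h u_h,\nabla_h u_h)-\langle[u_h]_e,\{\nabla_h u_h\}_e\rangle+\langle\check u_h,[\nabla_h u_h]\rangle$; to control the cross terms I would add $\check q_h = -c_0\,\rho^{-1}h_e^{-1}\check{\mathcal Q}_h^p[u_h]_e$ (legal by (a) since $\check Q_h$ contains constants, and meaningful by (c) which puts $\{\nabla_h V_h\}_e$ into $\check Q_h$ so the projection interacts correctly) to recover the $\langle\rho^{-1}h_e^{-1}\check{\mathcal Q}_h^p[u_h]_e,\check{\mathcal Q}_h^p[u_h]_e\rangle$ piece of $\|u_h\|_{1,\rho_h}^2$. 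The $\check u_h$ contribution to the norm is handled by testing against a suitable $\bm q_h$ whose jump reproduces $\check u_h$; this is where the residual-correction structure and the trace/inverse inequalities on shape-regular meshes combine, and one must track the $\rho$-weights so that everything balances. Throughout I would use the scaled trace inequality $\|v_h\|_{0,\partial K}^2\lesssim h_K^{-1}\|v_h\|_{0,K}^2$ and Young's inequality with $\rho$-dependent weights, choosing the small constants ($c_0$ above) and the threshold $\rho_0$ so the negative contributions are absorbed. The requirement $\eta\cong\tau^{-1}=\rho h_e$ makes the $c$-block of size $\|\check u_h\|_{0,\rho_h^{-1}}^2$, matching the norm, so the standard saddle-point estimate (e.g.\ Brezzi's theorem with a nonnegative, norm-equivalent $c$) closes the argument and gives both the inf-sup and its consequence \eqref{eq:grad-stability} after bounding the right-hand side functionals by the dual norms defined in the statement.

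\textbf{Quasi-optimality and rates.} For \eqref{eq:grad-quasi} I would use consistency (the Remark: $(\bm p,0;u,0)$ solves \eqref{eq:XG}), so the error $(\bm p-\bm p_h,-\check p_h;u-u_h,-\check u_h)$ satisfies the homogeneous equations tested against the discrete space; then for arbitrary $\bm q_h\in\bm Q_h$, $v_h\in V_h$ write the error as (interpolation error) $+$ (discrete error $\bm q_h-\bm p_h$, etc.), apply the uniform inf-sup to the discrete part, and bound the consistency-type residual it generates by $\|\bm p-\bm q_h\|_{0,c}+\|u-v_h\|_{1,\rho_h}$ using boundedness of $a$, $b$, $c$ in the $\|\cdot\|_{0,\rho_h}$/$\|\cdot\|_{1,\rho_h}$ norms — the boundedness of $b$ again requiring the scaled trace inequality to control $\langle[u_h]_e,\{\bm q_h\}_e\rangle$ and $\langle\check u_h,[\bm q_h]\rangle$. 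Taking the infimum gives \eqref{eq:grad-quasi}. Finally \eqref{eq:grad-estimate} follows by choosing $\bm q_h$ and $v_h$ to be the standard $L^2$- or nodal interpolants of $\bm p\in\bm H^{k+1}$ into $\bm Q_h^k$ and of $u\in H^{k+2}$ into $V_h^{k+1}$; the approximation estimates $\|\bm p-\bm q_h\|_{0,c}\lesssim h^{k+1}|\bm p|_{k+1}$ and $\|u-v_h\|_{1,\rho_h}\lesssim h^{k+1}|u|_{k+2}$ (the jump-seminorm term in $\|\cdot\|_{1,\rho_h}$ contributing the same order because $[u]_e=0$ for the exact solution, so $\check{\mathcal Q}_h^p[u-v_h]_e=\check{\mathcal Q}_h^p[-v_h]_e$ is a trace error scaled by $\rho^{-1/2}h_e^{-1/2}$, which is $O(h^{k+1})$) complete the proof.

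\textbf{Main obstacle.} The one genuinely delicate step is the inf-sup lower bound for $b$ with constants independent of $\rho$: one has to pick the test function $\tilde{\bm q}_h$ as a carefully weighted combination of $\nabla_h u_h$, a lift of $\check u_h$, and a multiple of $\check{\mathcal Q}_h^p[u_h]_e$, and then show via trace/inverse inequalities that the "good" terms dominate the "bad" ones uniformly in $\rho\in(0,\rho_0]$ — it is precisely to make this work that hypotheses (a)--(c) and the coupling $\eta\cong\tau^{-1}$ are imposed, and getting the bookkeeping of $\rho$-powers right is the crux.
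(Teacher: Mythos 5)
Your overall architecture is the paper's: the crux is a $\rho$-uniform inf-sup for $\tilde a$ proved by testing with $\bm q_h=\nabla_h u_h$ (condition (b)) plus $\check q_h=-\rho^{-1}h_e^{-1}\check{\mathcal Q}_h^p[u_h]_e$ (conditions (a) and (c) to handle the projection and the cross terms $\langle\check u_h,[\nabla_h u_h]\rangle$, $\langle\llbracket u_h\rrbracket,\{\nabla_h u_h\}\rangle$ via trace/inverse inequalities), with the $\check u_h$ part of the norm supplied by the $c$-block since $\eta^{-1}\cong(\rho h_e)^{-1}$; parts 2 and 3 then follow from Babu\v{s}ka theory and interpolation exactly as you describe. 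The paper packages this as a single combined test function $(\gamma\tilde{\bm p}_h+\tilde{\bm s}_h,\,-\gamma\tilde u_h)$ rather than verifying Brezzi's conditions separately, but that is a presentational difference, not a mathematical one.

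There is, however, one step in your sketch that would fail as stated and should be deleted: the claim that ``the $\check u_h$ contribution to the norm is handled by testing against a suitable $\bm q_h$ whose jump reproduces $\check u_h$.'' If you lift $[\bm q_h]=\rho^{-1}h_e^{-1}\check u_h$ so that $\langle\check u_h,[\bm q_h]\rangle=\|\check u_h\|^2_{0,\rho_h^{-1}}$, the standard lifting bound gives $\|\bm q_h\|_{0,c}\lesssim h_e^{1/2}\cdot\rho^{-1}h_e^{-1}\|\check u_h\|_{0,e}=\rho^{-1/2}\|\check u_h\|_{0,\rho_h^{-1}}$, so the resulting inf-sup quotient degenerates like $\rho^{1/2}$ as $\rho\to0$; an inf-sup for $b$ alone with respect to the \emph{full} norm $\|\tilde u_h\|_{1,\rho_h}$ (which contains $\|\check u_h\|^2_{0,\rho_h^{-1}}$) is therefore not uniform in $\rho$. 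The correct mechanism — which you in fact also state in your final sentence of that paragraph — is that $\|\check u_h\|^2_{0,\rho_h^{-1}}$ is controlled entirely by $c(\tilde u_h,\tilde u_h)=\langle\eta^{-1}\check u_h,\check u_h\rangle$ through the test function $\tilde v_h=-\gamma\tilde u_h$ (equivalently, a perturbed/penalized saddle-point theorem with the combined condition $\sup_{\tilde q}b(\tilde q,\tilde u)/\|\tilde q\|+c(\tilde u,\tilde u)^{1/2}\gtrsim\|\tilde u\|_{1,\rho_h}$), with the contamination $\langle\check u_h,[\nabla_h u_h]\rangle$ absorbed because it is bounded by $\rho\epsilon^{-1}\|\check u_h\|^2_{0,\rho_h^{-1}}+C\epsilon\|\nabla_h u_h\|_0^2$, the extra factor $\rho$ being what permits a $\rho$-independent $\gamma$ once $\rho\le\rho_0$. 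Keep that route and drop the lifting; with that correction the argument is complete and matches the paper.
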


\begin{proof}
First, we consider the boundedness of formulation 
\eqref{eq:big-bilinear}. The boundedness of $a(\cdot,\cdot)$ and
$c(\cdot,\cdot)$ follows directly from the definition of
parameter-dependent norms \eqref{eq:grad-norms}. For the boundedness
of $b(\cdot, \cdot)$ given in \eqref{eq:XG-forms-b1}, we use the
Cauchy-Schwarz inequality, trace inequality, and the following
inequality 
$$ 
h_e^{-1}\| [u_h]_e\|_{0,e}^2 
\lesssim |\nabla_h u_h|_{0,
\omega_e}^2 +  h_e^{-1} \|\check{\mathcal Q}_h^p [u_h]_e\|^2_{0,e},
$$ 
provided that $\check{Q}_h$ contains piecewise constant function
space. Here, $\omega_e=\bigcup\limits_{e\subset \partial K}K$.

Next we consider the inf-sup condition for the bilinear form
$\tilde{a}((\cdot,\cdot),(\cdot,\cdot))$ defined in
\eqref{eq:big-bilinear}. The proof follows from the technique shown in
\cite{hong2017uniformly}.  For any given $(\tilde{\bm p}_h, \tilde
u_h)$, since $\nabla_h V_h \subset \bm Q_h$, we choose
\begin{equation} \label{eq:grad-based-qv}
\tilde{\bm q}_h = \gamma \tilde{\bm p}_h +
\tilde{\bm s}_h := \gamma \tilde{\bm p}_h + 
\begin{pmatrix}
\nabla_h u_h \\
-\rho^{-1}h_e^{-1}\check {\mathcal Q}_h^p [u_h]_e
\end{pmatrix}, \qquad \tilde v_h=-\gamma \tilde
u_h,
\end{equation}
where $\gamma$ is a constant that will be determined later. The
boundedness of $\tilde{\bm q}_h$ and $\tilde v_h$ under the
parameter-dependent norms \eqref{eq:grad-norms} is straightforward.
Next, we have 
$$ 
\begin{aligned}
\tilde{a}((\tilde{\bm p}_h, \tilde u_h),
(\tilde{\bm q}_h, \tilde v_h)) 
&= a( \tilde{\bm p}_h,
\gamma \tilde{\bm p}_h + \tilde{\bm s}_h ) + b( \gamma
\tilde{\bm p}_h + \tilde{\bm s}_h, \tilde{u}_h) + 
b(\tilde{\bm p}_h, -\gamma \tilde{u}_h) + \gamma
c(\tilde{u}_h, \tilde{u}_h) \\
&= \gamma a(\tilde{\bm p}_h, \tilde{\bm p}_h) + a(\tilde{\bm p}_h, \tilde{\bm s}_h) + b(\tilde{\bm s}_h,
\tilde{u}_h) + \gamma c(\tilde{u}_h, \tilde{u}_h) \\
&= \gamma \|\tilde{\bm p}_h\|_{0,\rho_h}^2 + \gamma \langle \eta^{-1}\check{u}_h, \check{u}_h \rangle
 + a(\tilde{\bm p}_h, \tilde{\bm s}_h) + b(\tilde{\bm s}_h, \tilde{u}_h) \\ 
&\geq \gamma \|\tilde{\bm p}_h\|_{0,\rho_h}^2 + C_0\gamma \|\check{u}_h\|_{0, \rho_h^{-1}}^2
 + a(\tilde{\bm p}_h, \tilde{\bm s}_h) + b(\tilde{\bm s}_h, \tilde{u}_h).
\end{aligned}
$$ 
Clearly, from the definitions of $a(\cdot,\cdot)$  in
\eqref{eq:XG-forms-a} and $b(\cdot,\cdot)$ in
\eqref{eq:XG-forms-b1}, we have  
$$ 
\begin{aligned}
a(\tilde{\bm p}_h, \tilde{\bm s}_h) &\geq
-\epsilon_1\|\tilde{\bm s}_h\|_{0,\rho_h}^2 
-\epsilon_1^{-1} \|\tilde{\bm p}_h\|_{0,\rho_h}^2\\
& = -\epsilon_1 (c\nabla_h u_h, \nabla_h u_h) - \epsilon_1
\langle \rho^{-1} h_e^{-1} \check{\mathcal Q}_h^p [u_h]_e,
 \check{\mathcal Q}_e^p [u_h]_e \rangle-\epsilon_1^{-1} \|\tilde{\bm p}_h\|_{0,\rho_h}^2,\\
b(\tilde{\bm s}_h, \tilde{u}_h) & = \|\nabla_h u_h\|_0^2 +
\langle \rho^{-1} h_e^{-1} \check{\mathcal Q}_h^p  [u_h]_e,
  \check{\mathcal Q}_h^p [u_h]_e\rangle + \langle\check
  u_h, [\nabla_h u_h]\rangle  - \langle\llbracket u_h
  \rrbracket, \{\nabla_h u_h\}\rangle. 
\end{aligned}
$$ 
The standard Cauchy-Schwarz inequality, trace inequality, inverse
inequality and the third condition $\{\nabla_hV_h\}_e \subset \check
Q_h$ imply that 
$$ 
\begin{aligned}
\langle\check u_h, [\nabla_h u_h]\rangle &
\ge-\epsilon_2^{-1}\sum_{e\in \mathcal E_h}\|h_e^{-\frac{1}{2}}\check u_h\|^2_{0,e}-
\epsilon_2\sum_{e\in \mathcal E_h}\|h_e^{\frac{1}{2}}[\nabla_h u_h]\|^2_{0,e}\\
&\geq -\epsilon_2^{-1} \langle h^{-1}_e\check u_h, \check
u_h\rangle-C_1\epsilon_2\sum_{e\in \mathcal E_h}
\|\nabla_h u_h\|_{0,\omega_e}^2 \\
&\geq - \rho\epsilon_2^{-1} \|\check u_h\|_{0,\rho_h^{-1}}^2
 -C_2\epsilon_2 \|\nabla_h u_h\|_{0}^2, \\
-\langle\llbracket u_h \rrbracket, \{\nabla_h u_h\}\rangle& \ge -\epsilon_3\|\nabla_hu_h\|_{0}^2 -C_3\epsilon_3^{-1}\langle h_e^{-1}
\check{\mathcal Q}_h^p [u_h]_e, \check{\mathcal
  Q}_h^p [u_h]_e \rangle.
\end{aligned}
$$ 
Therefore, from the above inequalities, we deduce that when $\rho \in
(0, \rho_0]$,
$$ 
\begin{aligned}
\tilde{a}((\tilde{\bm p}_h, \tilde u_h), (\tilde{\bm
q}_h, \tilde v_h)) &\geq (\gamma - \epsilon_1^{-1}) \|\tilde{\bm
p}_h\|_{0,\rho_h}^2 + (C_0\gamma -\rho\epsilon_2^{-1}) \|\check{u}_h\|_{0,\rho_h^{-1}}^2 \\
& ~~~~+(1-\|c\|_{\infty}\epsilon_1- C_2\epsilon_2-\epsilon_3)\|\nabla_h u_h\|_0^2 +
(1-\epsilon_1-C_3\rho\epsilon_3^{-1})\langle \rho^{-1}  h_e^{-1}
\check{\mathcal Q}_h^p [u_h]_e, \check{\mathcal Q}_h^p  [u_h]_e
\rangle \\
& \geq \frac{1}{4} \left( \|\tilde{\bm p}_h\|_{0,\rho_h}^2 +
\|\tilde{u}_h\|_{1, \rho_h}^2 \right), 
\end{aligned}
$$ 
by choosing $\epsilon_1$, $\epsilon_2,\epsilon_3$, $\gamma$ and
$\rho_0$ as 
$$ 
\epsilon_1 = \frac{1}{4\max\{\|c\|_{\infty}, 1\}}, \quad 
\epsilon_2 = \frac{1}{4C_2}, \quad 
\epsilon_3 = \frac{1}{4}, \quad 
\gamma = \frac{1}{4} + \frac{1}{2C_0} + 4\max\{\|c\|_{\infty}, 1\}, \quad 
\rho_0 = \min\{ \frac{1}{16C_2}, \frac{1}{8C_3}\}.
$$ 
Hence, we have the inf-sup condition for $\tilde{a}((\cdot,\cdot),
(\cdot,\cdot))$ under the parameter-dependent norms
\eqref{eq:grad-norms}. The stability result \eqref{eq:grad-stability},
quasi-optimal error estimates \eqref{eq:grad-quasi} and
\eqref{eq:grad-estimate} then follow directly from the Babu\v{s}ka
theory and interpolation theory. 
\end{proof}

\subsection{Divergence-based uniform inf-sup condition}
In light of the formulation of $b(\cdot, \cdot)$ in
\eqref{eq:XG-forms-b2}, we then establish the divergence-based inf-sup
condition. For any $\bm p_h\in \bm Q_h, \check p_h\in \check Q_h,
u_h\in V_h, \check u_h\in \check V_h$, the norms are defined by 
\begin{equation}\label{eq:div-norms}
\begin{aligned}
\|\tilde{\bm p}_h\|^2_{{\rm div},\rho_h} &:= 
\underbrace{ 
(c \bm{p}_h, \bm{p}_h) +
({\rm div}_h\bm{p}_h, {\rm div}_h\bm{p}_h)
 + \langle\rho^{-1} h_e^{-1}
\check{\mathcal Q}_h^u [\bm{p}_h],
\check{\mathcal Q}_h^u[\bm{p}_h] \rangle
}_{\|{\bm p}_h\|^2_{{\rm div},\rho_h}} +
\underbrace{ \langle \rho^{-1} h_e^{-1}\check p_h,\check
  p_h\rangle }_{\|\check p_h\|_{0,\rho_h^{-1}}^2},\\
\|\tilde u_h\|^2_{0,\rho_h} &:=
\underbrace{ (u_h,u_h)}_{\|u_h\|_0^2} +  
\underbrace{  \langle\rho h_e\check u_h,\check u_h\rangle}_{\|\check u_h\|^2_{0,\rho_h}}.
\end{aligned}
\end{equation}
We are now in the position to state the second main result. 

\begin{theorem} \label{thm:div-infsup}
If we choose $\eta= (\rho h_e)^{-1}, \tau\cong\eta^{-1} = \rho h_e$ in
the formulation \eqref{eq:XG} and the spaces $ \bm Q_h,
V_h,\check{V}_h$ satisfy the conditions
\begin{enumerate}
\item [(a)] Let $\bm R_h := \bm Q_h \cap \bm{H}({\rm div},\Omega)$ and
$\bm R_h \times V_h$ is a stable pair for mixed method;
\item [(b)] ${\rm div}_h \bm Q_h = V_h$;
\item [(c)] $\{{\rm div}_h \bm Q_h\} \subset \check{V}_h$.
\end{enumerate}
Then we have 
\begin{enumerate}
\item There exists $\rho_0 > 0$, $\tilde{a}((\cdot,\cdot),
(\cdot,\cdot))$ in \eqref{eq:big-bilinear} is uniformly well-posed with respect to the norms
$\|\cdot\|_{{\rm div}, \rho_h}$, $\|\cdot\|_{0,\rho_h}$ when $\rho\in
(0, \rho_0]$ and the following estimate holds:
\begin{equation}\label{eq:div-stability}
\|\bm{ p}_h\|_{{\rm div},\rho_h}+\|\check p_h\|_{0,\rho_h^{-1}}+\|u_h\|_0+\|\check
u_h\|_{0,\rho_h}\lesssim \|f\|_0 +\|g_D\|_{-\frac{1}{2},\rho_h,\Gamma_D} + 
\|g_N\|_{\frac{1}{2}, \rho_h,\Gamma_N}.
\end{equation} 
where 
$$ 
\begin{aligned}
\|g_D\|_{-\frac{1}{2}, \rho_h, \Gamma_D} &:= 
\sup\limits_{\bm q_h\in \bm Q_h \setminus\{\bm{0}\}} \frac{(g_D, \bm
  q_h\cdot \bm n)_{\Gamma_D}}{\|\bm q_h\|_{{\rm div}, \rho_h}} + 
\sup\limits_{\check q_h\in \check Q_h \setminus \{0\}} \frac{(g_D,
  \check q_h)_{\Gamma_D}}{\|\check q_h\|_{1,\rho_h}}, \\
\|g_N\|_{\frac{1}{2},\rho_h,\Gamma_N}& :=\sup\limits_{v_h\in V_h
\setminus \{0\}}
\frac{(g_N, v_h)_{\Gamma_N}}{\|v_h\|_0}
+ \sup\limits_{\check v_h\in \check V_h \setminus\{0\}} \frac{(g_N, \check
  v_h)_{\Gamma_N}}{\|\check v_h\|_{0,\rho_h}}.
\end{aligned}
$$ 
\item Let $(\bm p,  u)\in H({\rm div}, \Omega)\times
L^2(\Omega)$ be the solution of \eqref{H1} and $(\tilde{\bm p}_h,
\tilde{u}_h)\in \tilde{\bm Q}_h\times \tilde{V}_h$
be the solution of \eqref{eq:XG}, we have the following quasi-optimal
approximation: 
\begin{equation} \label{eq:div-quasi}
 \|\bm p-\bm{ p}_h\|_{{\rm div},\rho_h} 
 + \|\check p_h\|_{0,\rho_h^{-1}} + \|u- u_h\|_0 +\|\check u_h\|_{0,\rho_h} 
 \lesssim \inf_{\bm q_h\in \bm Q_h, v_h\in V_h}\left(\|\bm
     p-\bm{q}_h\|_{{\rm div},\rho_h}+\|u- v_h\|_0\right).
\end{equation}
\item If $\bm p\in \bm {H}^{k+2}(\Omega), u\in H^{k+1}(\Omega) ~(k\ge 0)$, and we choose the
spaces $\bm Q_h\times \check Q_h\times V_h\times \check V_h=\bm
Q_h^{k,RT} (or~\bm{Q}_h^{k+1}) \times \check Q_h \times V_h^{k}\times\check V_h^{k}$
for any $\check Q_h$, then the following estimate holds:
\begin{equation} \label{eq:div-estimate}
 \|\bm p-\bm{ p}_h\|_{{\rm div},\rho_h}+\|\check p_h\|_{0,\rho_h^{-1}}
 +\|u- u_h\|_0+\|\check u_h\|_{0,\rho_h}\lesssim h^{k+1} (|\bm p|_{k+2}+|u|_{k+1}).
\end{equation}
\end{enumerate}
\end{theorem}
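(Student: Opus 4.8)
The plan is to reduce all three assertions, exactly as in the proof of Theorem~\ref{thm:grad-infsup}, to two facts about the bilinear form $\tilde a(\cdot,\cdot)$ of \eqref{eq:big-bilinear}: its boundedness, and a uniform inf--sup condition, both with respect to the product norms built from $\|\cdot\|_{{\rm div},\rho_h}$ and $\|\cdot\|_{0,\rho_h}$ in \eqref{eq:div-norms} and valid for $\rho\in(0,\rho_0]$. Granting these, assertion~1 is the Babu\v{s}ka stability estimate; assertion~2 is the associated Strang/C\'ea estimate, using that $(\bm p,0;u,0)$ solves \eqref{eq:XG} (the Remark after \eqref{eq:XG}), so one may take $\check q_h=\check v_h=0$ in the infimum; and assertion~3 follows from assertion~2 by choosing $\bm Q_h^{k,RT}\times V_h^{k}$ (or $\bm Q_h^{k+1}\times V_h^{k}$), for which (a)--(c) hold, picking $\bm q_h$ to be the Raviart--Thomas (resp.\ BDM) interpolant of $\bm p$ --- which lies in $\bm{H}({\rm div},\Omega)$, so its jump contributes nothing to $\|\bm p-\bm q_h\|_{{\rm div},\rho_h}$ --- and $v_h$ the $L^2$-projection of $u$, and invoking the standard $O(h^{k+1})$ approximation estimates. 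The single structural change relative to Theorem~\ref{thm:grad-infsup} is that one now works with the \emph{second} representation \eqref{eq:XG-forms-b2} of $b$, so only $\|u_h\|_0$ (not $\|\nabla_h u_h\|_0$) is needed, and the roles of $\bm p_h$ and $u_h$ are interchanged.

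Boundedness of $a(\cdot,\cdot)$ and $c(\cdot,\cdot)$ is immediate from \eqref{eq:div-norms}. For $b(\tilde{\bm q}_h,\tilde u_h)$ written as in \eqref{eq:XG-forms-b2}: $(u_h,{\rm div}_h\bm q_h)$ is controlled by $\|u_h\|_0\|\bm q_h\|_{{\rm div},\rho_h}$; $\langle[u_h]_e,\check q_h\rangle$ by a trace (and inverse) inequality against the $\rho^{-1}h_e^{-1}$-weight in $\|\check q_h\|_{0,\rho_h^{-1}}$; and $\langle\check u_h,[\bm q_h]\rangle$ equals $\langle\check u_h,\check{\mathcal Q}_h^u[\bm q_h]\rangle$ because $\check u_h\in\check V_h$, hence is controlled by $\|\check u_h\|_{0,\rho_h}\|\bm q_h\|_{{\rm div},\rho_h}$. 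The remaining coupling term $\langle\{u_h\},[\bm q_h]\rangle$ is handled with Cauchy--Schwarz, the trace inequality for $\{u_h\}$, and the divergence analogue of the edge inequality used in the proof of Theorem~\ref{thm:grad-infsup}, namely $h_e^{-1}\|[\bm q_h]\|_{0,e}^2\lesssim\|{\rm div}_h\bm q_h\|_{0,\omega_e}^2+h_e^{-1}\|\check{\mathcal Q}_h^u[\bm q_h]\|_{0,e}^2$, which for the admissible spaces follows from conditions (b)--(c); together these give $|\langle\{u_h\},[\bm q_h]\rangle|\lesssim\|u_h\|_0\|\bm q_h\|_{{\rm div},\rho_h}$. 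The symmetric estimates bound $b(\tilde{\bm p}_h,\tilde v_h)$.

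For the inf--sup condition, fix $(\tilde{\bm p}_h,\tilde u_h)$. Condition (a) furnishes $\bm r_h\in\bm R_h\cap\bm{H}({\rm div},\Omega)$ with ${\rm div}\,\bm r_h=-u_h$, vanishing normal trace on $\Gamma_N$, and $\|\bm r_h\|_{{\rm div}}\lesssim\|u_h\|_0$; since $\bm r_h\in\bm{H}({\rm div},\Omega)$, $[\bm r_h]=0$ on $\mathcal E_h$ and $\check{\mathcal Q}_h^u[\bm r_h]=0$, so $\|(\bm r_h,0)\|_{{\rm div},\rho_h}\lesssim\|u_h\|_0$. Using (b) and (c) set
\[
\tilde{\bm q}_h=\gamma\tilde{\bm p}_h+(\bm r_h,0),\qquad
\tilde v_h=-\gamma\tilde u_h+\bigl(-{\rm div}_h\bm p_h,\ \rho^{-1}h_e^{-1}\check{\mathcal Q}_h^u[\bm p_h]\bigr),
\]
which lies in $\tilde{\bm Q}_h\times\tilde V_h$ (since $-{\rm div}_h\bm p_h\in V_h$ and $\rho^{-1}h_e^{-1}\check{\mathcal Q}_h^u[\bm p_h]\in\check V_h$) and has norm $\lesssim\|\tilde{\bm p}_h\|_{{\rm div},\rho_h}+\|\tilde u_h\|_{0,\rho_h}$. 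With $\tau=\rho h_e$, $\eta=(\rho h_e)^{-1}$, a direct computation --- using $[\bm r_h]=0$ and condition (c) to replace $\{{\rm div}_h\bm p_h\}$ and $[\bm p_h]$ by their $\check{\mathcal Q}_h^u$-projections --- gives $\tilde a((\tilde{\bm p}_h,\tilde u_h),(\tilde{\bm q}_h,\tilde v_h))$ equal to $\gamma(\|\bm p_h\|_{0,c}^2+\|\check p_h\|_{0,\rho_h^{-1}}^2+\|\check u_h\|_{0,\rho_h}^2)+\|u_h\|_0^2+\|{\rm div}_h\bm p_h\|_0^2+\langle\rho^{-1}h_e^{-1}\check{\mathcal Q}_h^u[\bm p_h],\check{\mathcal Q}_h^u[\bm p_h]\rangle$ plus the cross terms $(c\bm p_h,\bm r_h)$, $-\langle\{{\rm div}_h\bm p_h\},\check{\mathcal Q}_h^u[\bm p_h]\rangle$, $\langle[{\rm div}_h\bm p_h]_e,\check p_h\rangle$ and $-\langle\check u_h,\check{\mathcal Q}_h^u[\bm p_h]\rangle$. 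Young's inequality, with a parameter $\epsilon_1$ depending only on $\|c\|_\infty$ and the mixed-stability constant, splits $(c\bm p_h,\bm r_h)$ into an $\epsilon_1$-fraction of $\|u_h\|_0^2$ plus an $\epsilon_1^{-1}$-multiple of $\|\bm p_h\|_{0,c}^2$; the three edge cross terms are each bounded by a fraction of $\|{\rm div}_h\bm p_h\|_0^2$ plus a $\rho$-small (or $O(1)$) multiple of the matching norm term. Fixing $\epsilon_1$ small, then $\rho_0$ small enough to absorb the $\rho$-weighted remainders, then $\gamma$ large, yields $\tilde a((\tilde{\bm p}_h,\tilde u_h),(\tilde{\bm q}_h,\tilde v_h))\gtrsim\|\tilde{\bm p}_h\|_{{\rm div},\rho_h}^2+\|\tilde u_h\|_{0,\rho_h}^2$, i.e.\ the inf--sup condition.

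The main obstacle is precisely this bookkeeping. The norm component $\|u_h\|_0^2$ enters the lower bound \emph{only} through the mixed-method correction $(\bm r_h,0)$, via $b((\bm r_h,0),\tilde u_h)=\|u_h\|_0^2$; but that same correction also produces, through $a(\tilde{\bm p}_h,(\bm r_h,0))=(c\bm p_h,\bm r_h)$, a term of size $\|\bm r_h\|_{0,c}^2\simeq\|u_h\|_0^2$ of indefinite sign, so the Young parameter there must be fixed small \emph{before} the scaling constant $\gamma$ --- the standard ``coercive up to lower order'' device for mixed discretizations. At the same time the divergence correction $(-{\rm div}_h\bm p_h,\rho^{-1}h_e^{-1}\check{\mathcal Q}_h^u[\bm p_h])$ couples $\|{\rm div}_h\bm p_h\|_0$, $\|\check p_h\|_{0,\rho_h^{-1}}$ and $\|\check u_h\|_{0,\rho_h}$, and it is the requirement that these couplings be absorbable that forces the threshold $\rho\le\rho_0$. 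Once the uniform inf--sup and boundedness are secured, the reduction to Babu\v{s}ka stability (assertion~1), the Strang estimate (assertion~2), and the interpolation estimates (assertion~3) are routine.
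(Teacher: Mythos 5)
Your proposal is correct and follows essentially the same route as the paper: the identical test-function choice $\tilde{\bm q}_h=\gamma\tilde{\bm p}_h+(\bm r_h,0)$ with $-{\rm div}\,\bm r_h=u_h$ from the stable mixed pair, and $\tilde v_h=-\gamma\tilde u_h-({\rm div}_h\bm p_h,\,-\rho^{-1}h_e^{-1}\check{\mathcal Q}_h^u[\bm p_h])$, the same cross terms handled by Young's inequality with $\epsilon_1$ fixed before $\gamma$ and $\rho_0$ absorbing the $\rho$-weighted remainders, and the same reduction of assertions 1--3 to Babu\v{s}ka and interpolation theory. The only difference is that you spell out the boundedness of $b$ and the interpolation step, which the paper omits as standard.
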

\begin{proof}
Since $\{V_h\}=\{ {\rm div}_h \bm Q_h\}\subset \check V_h$, the
boundedness of $\tilde{a}((\cdot,\cdot),(\cdot,\cdot)) $ under
the divergence-based norms \eqref{eq:div-norms} is standard (by
the Piola's transformation) and is therefore omitted. 

Next we consider the inf-sup condition for the bilinear form
$\tilde{a}((\cdot,\cdot),(\cdot,\cdot))$ defined in
\eqref{eq:big-bilinear}.  The proof follows from the technique shown
in \cite{hong2017uniformly}.  Since $\bm R_h \times V_h$ is a stable
pair for mixed method, for any given $(\tilde{\bm p}_h, \tilde u_h)$,
there exists $\bm r_h\in \bm R_h$ such that 
\begin{equation}\label{eq:RT0}
-{\rm div} \bm r_h=u_h \quad \hbox{and} \quad \|\bm
r_h\|_0 + \|{\rm div} \bm r_h\|_0 \leq C_{\rm stab} \|u_h\|_0.
\end{equation}
Now, we choose
\begin{equation}
\tilde{\bm q}_h = \gamma \tilde{\bm p}_h +
\tilde{\bm s}_h := \gamma \tilde{\bm p}_h + 
\begin{pmatrix}
\bm r_h \\
0
\end{pmatrix},\qquad
\tilde{v}_h=-\gamma \tilde{u}_h - \tilde{w}_h := -\gamma \tilde{u}_h - 
\begin{pmatrix}
{\rm div}_h \bm p_h \\
-\rho^{-1}h_e^{-1}\check{\mathcal Q}_h^u [\bm p_h]
\end{pmatrix},
\end{equation}
where $\gamma$ is a constant which will be determined later. First, we
have the boundedness of $\tilde{\bm q}_h$ and $\tilde{v}_h$ by using
\eqref{eq:RT0}, 
$$ 
\begin{aligned}
\|\tilde{\bm q}_h\|_{{\rm div},\rho_h} &\leq \gamma \|\tilde{\bm
  p}_h\|_{{\rm div},\rho_h} + \|\tilde{\bm s}_h\|_{{\rm div},\rho_h}
\lesssim \|\tilde{\bm p}_h\|_{{\rm div}, \rho_h} + \|u_h\|_0, \\
\|\tilde{v}_h\|_{0,\rho_h} &\leq \gamma\|\tilde{u}_h\|_{0,\rho_h} +
\|\tilde{w}_h\|_{0,\rho_h} \lesssim  \|\tilde{u}_h\|_{0,\rho_h} +
\|{\rm div}_h \bm p_h \|_0 +  \langle \rho^{-1}h_e^{-1}
\check{\mathcal Q}_h^u[\bm p_h], \check{\mathcal Q}_h^u[\bm p_h] 
\rangle^{1/2}.
\end{aligned}
$$ 
Next, we have 
$$ 
\begin{aligned}
\tilde{a}((\tilde{\bm p}_h, \tilde u_h),
(\tilde{\bm q}_h, \tilde v_h)) 
&= a( \tilde{\bm p}_h, \gamma \tilde{\bm p}_h + \tilde{\bm
s}_h ) + b(\gamma \tilde{\bm p}_h + \tilde{\bm s}_h, \tilde{u}_h) +
b(\tilde{\bm p}_h, -\gamma \tilde{u}_h - \tilde{w}_h) +
c(\tilde{u}_h, \gamma \tilde{u}_h + \tilde{w}_h) \\
&= \gamma a(\tilde{\bm p}_h, \tilde{\bm p}_h) +
a(\tilde{\bm p}_h, \tilde{\bm s}_h) + 
b(\tilde{\bm s}_h, \tilde{u}_h) - b(\tilde{\bm
p}_h, \tilde{w}_h) + \gamma c(\tilde{u}_h, \tilde{u}_h) +
c(\tilde{u}_h, \tilde{w}_h)\\
&= \gamma (c\bm{p}_h, \bm{p}_h)+ \gamma \langle \tau^{-1}\check p_h, \check p_h\rangle+ \gamma 
\langle\rho h_e\check u_h,\check u_h\rangle \\
& ~~ + a(\tilde{\bm p}_h, \tilde{\bm s}_h) + b(\tilde{\bm s}_h, \tilde{u}_h) - b(\tilde{\bm p}_h,
\tilde{w}_h) + c(\tilde{u}_h, \tilde{w}_h) \\
&\geq \gamma (c\bm{p}_h, \bm{p}_h)+ C_0\gamma \|\check p_h\|_{0,\rho_h^{-1}}^2 + \gamma 
\|\check u_h\|_{0,\rho_h}^2\\
& ~~ + a(\tilde{\bm p}_h, \tilde{\bm s}_h) + b(\tilde{\bm s}_h, \tilde{u}_h) - b(\tilde{\bm p}_h,
\tilde{w}_h) + c(\tilde{u}_h, \tilde{w}_h).
\end{aligned}
$$
Clearly, from the definitions of $a(\cdot,\cdot)$ in
\eqref{eq:XG-forms-a} and $c(\cdot,\cdot)$ in
\eqref{eq:XG-forms-c}, we have 
$$ 
\begin{aligned}
a(\tilde{\bm p}_h, \tilde{\bm s}_h) &\geq -\epsilon_1 \|\bm
r_h\|_0^2 - C_1\epsilon_1^{-1}  (c\bm p_h, \bm p_h) 
\geq -C_{\rm stab}^2\epsilon_1 \|u_h\|_0^2 - C_1\epsilon_1^{-1}(c\bm p_h, \bm
    p_h),\\
c(\tilde{u}_h, \tilde{w}_h) &\geq -\epsilon_2 
\langle\rho^{-1}h_e^{-1}\check{\mathcal Q}_h^u[\bm p_h],\check{\mathcal Q}_h^u[\bm p_h]\rangle -
C_2\epsilon_2^{-1} \|\check u_h\|_{0,\rho_h}^2. 
\end{aligned}
$$ 
Further, from \eqref{eq:RT0} and the formulation of $b(\cdot,\cdot)$
in \eqref{eq:XG-forms-b2}, we have 
$$ 
\begin{aligned}
b(\tilde{\bm s}_h, \tilde{u}_h) &= \|u_h\|_0^2, \\
-b(\tilde{\bm p}_h, \tilde{w}_h) &= \|{\rm div}_h \bm
p_h\|_0^2 + \langle\rho^{-1}h_e^{-1}\check{\mathcal Q}_h^u[\bm p_h],\check{\mathcal Q}_h^u[\bm p_h]\rangle
 + \langle [{\rm div}_h \bm p_h]_e, \check
p_h\rangle - \langle \{{\rm div}_h \bm p_h\}, [\bm p_h]
\rangle \\
& \geq (1-\epsilon_3-\epsilon_4)\|{\rm div}_h \bm p_h\|_0^2 
- C_3\rho \epsilon_3^{-1} \|\check p_h\|_{0,\rho_h^{-1}}^2 
+ (1-C_4\rho\epsilon_4^{-1}) \langle \rho^{-1}h_e^{-1}\check{\mathcal Q}_h^u[\bm p_h],  
\check{\mathcal Q}_h^u[\bm p_h] \rangle.
\end{aligned}
$$ 
Here, we use the Cauchy-Schwarz inequality, trace inequality and the
fact that $\{{\rm div}_h \bm{Q}_h\} \subset \check V_h$ in the last
step.  Therefore, from the above inequalities, we deduce that then
when $\rho \in (0, \rho_0]$,
$$ 
\begin{aligned}
\tilde{a}((\tilde{\bm p}_h, \tilde{u}_h), (\tilde{\bm q}_h,
\tilde{v}_h)) & \geq (\gamma - C_1\epsilon_1^{-1}) (c\bm{p}_h,
\bm{p}_h) + (1-\epsilon_3 -\epsilon_4)\|{\rm div}_h \bm p_h\|_0^2 \\ 
&~~~+ (1-\epsilon_2-C_4\rho\epsilon_4^{-1})
\langle \rho^{-1}  h_e^{-1}\check{\mathcal Q}_h^u[\bm p_h], 
\check{\mathcal Q}_h^u[\bm p_h]\rangle \\
&~~~+ (C_0\gamma - C_3\rho\epsilon_3^{-1})  \|\check
p_h \|_{0,\rho_h^{-1}}^2\\
&~~~+ (1-C_{\rm stab}^2\epsilon_1)\|u_h\|_0^2 
+ (\gamma - C_2\epsilon_2^{-1}) \|\check u_h\|_{0,\rho_h}^2 \\
& \geq \frac{1}{2}\left( \|\tilde{\bm p}_h\|_{{\rm div},\rho_h}^2 + \|\tilde
   v_h\|_{0,\rho_h}^2 \right),
\end{aligned}
$$ 
by choosing $\epsilon_1$, $\epsilon_2$, $\epsilon_3, \epsilon_4$,
$\gamma$ and $\rho_0$ as 
$$ 
\epsilon_1 = \frac{1}{2C_{\rm stab}^2},\quad \epsilon_2 = \epsilon_3 =
\epsilon_4 = \frac{1}{4}, 
\quad \gamma = \frac{1}{2} + \frac{1}{2C_0} + \max\{2C_{\rm stab}^2C_1, 4C_2, \frac{C_3}{4C_0C_4}\},
\quad \rho_0 = \frac{1}{16C_4}.
$$ 
Hence, we have the inf-sup condition for $\tilde{a}((\cdot,\cdot),
(\cdot,\cdot))$ under the parameter-dependent norms
\eqref{eq:div-norms}. The stability result \eqref{eq:div-stability},
quasi-optimal error estimates \eqref{eq:div-quasi} and
\eqref{eq:div-estimate} then follow directly from the Babu\v{s}ka
theory and interpolation theory. 
\end{proof} 

\section{Some limiting case of four filed formulation} \label{sec:limiting}
With the uniform inf-sup conditions, we revisit some limiting of formulation \eqref{eq:XG} in
case of $\rho\to 0$ \cite{hong2017unified}.

First, having the gradient-based inf-sup condition, we discuss the
limiting of formulation \eqref{eq:XG} with $g_D=0$ in case of $\tau = (\rho h_e)^{-1}$, $\eta \cong
\tau^{-1} = \rho h_e$ as $\rho\to 0$. Denote
$H^1_{0,\Gamma_D}(\Omega)=\{u\in H^1(\Omega): u|_{\Gamma_D}=0\}$.
Consider the $H^1$ conforming subspace $V^c_{h,g} = V_h\cap H^1_{0,
\Gamma_D}(\Omega)\subset V_h$, then the primal method when applying to
the Poisson equation \eqref{pois} can be written as: Find $(u^c_h,
\bm p^c_h)\in V^c_{h,g}\times \bm Q_h$ such that  
\begin{equation} \label{primalmethod}
\left\{
\begin{aligned}
(c\bm p^c_h, \bm q_h) + (\nabla u^c_h,\bm q_h)
  &=G_p(\bm q_h)\qquad \forall \bm q_h\in \bm Q_h,\\
(\bm p^c_h, \nabla v^c_h) &=F_p(v^c_h)\qquad\forall v^c_h\in
V^c_{h,0}.
\end{aligned}
\right.
\end{equation}
where $G_p(\bm q_h)=0, F_p(v^c_h) =-(f,v^c_h) + \langle g_N,
v_h^c\rangle_{\Gamma_N}$.  Then, by $\nabla  V^c_h\subset \nabla_h
V_h\subset \bm Q_h$, the well-posedness of the primal method (cf.
\cite{brenner2007mathematical}) implies that  
\begin{equation} \label{primalstability}
 \|\bm{p}^c_h\|_{0,c} + \|{u}^c_h\|_{1}  \leq
 C_p\left( \sup\limits_{\bm q_h\in \bm Q_h \setminus
\{\bm{0}\}}\frac{G_p(\bm q_h)}{\|\bm q_h\|_{0,c}}+ \sup\limits_{v^c_h\in
V_{h,g}^{c} \setminus
\{0\}}\frac{F_p(v^c_{h})}{\|{v}^c_h\|_{1}}\right).
\end{equation}
We have the following theorem.
\begin{theorem}
Assume that the spaces $\bm{Q}_h, V_h$ and $\check Q_h$
satisfy
\begin{enumerate}
\item[(a)] $\nabla_h V_h \subset \bm{Q}_h$;
\item[(b)] $\{\bm Q_h\}_e \subset \check Q_h$; 
\item[(c)] $V_h=V_h^k~(k\ge 1)$.
\end{enumerate}
Then formulation \eqref{eq:XG} with $g_D=0$ and $\tau = (\rho h_e)^{-1},
\eta\cong\tau^{-1} = \rho h_e$ converges to primal method
\eqref{primalmethod} as $\rho \to 0$.  Further, let $(\bm p_h^{\tau},
\check p_h^{\tau}, u_h^{\tau}, \check u_h^{\tau})$ be the solution of
\eqref{eq:XG} and $(\bm p_h^c, u_h^c)$ be the solution of
\eqref{primalmethod}, we have 
\begin{equation} \label{eq:grad-limit}
\|\bm p_h^{\tau}-\bm p_h^c\|_{0,c}
+(\|\nabla_h(u_h^{\tau}-u_h^c)\|_0^2+\sum_{e\in \mathcal
E_{h}}h_e^{-1}\|\llbracket u_h^{\tau}-u_h^c
\rrbracket\|^2_{0,e})^{\frac{1}{2}}\lesssim \rho^{\frac{1}{2}} R_p,
\end{equation}
where
$R_p := \|f\|_{-1,\rho_h}+\|g_N\|_{-\frac{1}{2}, \rho_h, \Gamma_N}$.
\end{theorem}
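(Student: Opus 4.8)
The plan is to view the limiting statement as a perturbation argument built on top of the gradient-based uniform inf-sup condition of Theorem~\ref{thm:grad-infsup}. First I would observe that, since $g_D=0$, choosing test functions $(\tilde{\bm q}_h,\tilde v_h)$ with $\check q_h=0$ in \eqref{eq:XG} and using conditions (a), (b) on the spaces (note that $\{\bm Q_h\}_e\subset\check Q_h$ guarantees $\check{\mathcal Q}_h^p[u_h^\tau]_e$ interacts cleanly with the forms), the four-field solution $(\bm p_h^\tau,\check p_h^\tau,u_h^\tau,\check u_h^\tau)$ satisfies a system that is a consistent perturbation of the primal method \eqref{primalmethod}; in particular, restricting the second equation of \eqref{eq:XG} to $V_{h,g}^c=V_h\cap H^1_{0,\Gamma_D}(\Omega)$ kills the jump terms $\langle[u_h^\tau]_e,\{\bm q_h\}_e\rangle$ (on conforming test functions) and eliminates $\check p_h^\tau$, so that $(\bm p_h^\tau,u_h^\tau)$ ``almost'' solves \eqref{primalmethod} up to terms controlled by $\|[u_h^\tau]_e\|$ and $\|\check u_h^\tau\|$.

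Next I would quantify the smallness of those defect terms. From the uniform stability estimate \eqref{eq:grad-stability} applied with $g_D=0$, we have
$$
\|\bm p_h^\tau\|_{0,c}+\|\check p_h^\tau\|_{0,\rho_h}+\|u_h^\tau\|_{1,\rho_h}+\|\check u_h^\tau\|_{0,\rho_h^{-1}}\lesssim \|f\|_{-1,\rho_h}+\|g_N\|_{-\frac12,\rho_h,\Gamma_N}=R_p.
$$
Unpacking the definition of $\|\cdot\|_{1,\rho_h}$ in \eqref{eq:grad-norms} this gives in particular
$$
\Big(\sum_{e}\rho^{-1}h_e^{-1}\|\check{\mathcal Q}_h^p[u_h^\tau]_e\|_{0,e}^2\Big)^{1/2}\lesssim R_p,\qquad \Big(\sum_e \rho^{-1}h_e^{-1}\|\check u_h^\tau\|_{0,e}^2\Big)^{1/2}\lesssim R_p,
$$
hence $\big(\sum_e h_e^{-1}\|\check{\mathcal Q}_h^p[u_h^\tau]_e\|_{0,e}^2\big)^{1/2}\lesssim \rho^{1/2}R_p$ and similarly for $\check u_h^\tau$. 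Using condition (c), i.e. $V_h=V_h^k$ with $k\ge1$, together with a trace/inverse inequality and the fact that $\check Q_h$ contains piecewise constants, one upgrades control of $\check{\mathcal Q}_h^p[u_h^\tau]_e$ to control of the full jump $[u_h^\tau]_e$ in the same power of $\rho$ (the discrete Poincaré-type argument that also underlies the boundedness step of Theorem~\ref{thm:grad-infsup}); this is the mechanism that forces $u_h^\tau$ to become conforming in the limit. Therefore there is a conforming approximation, and more precisely the defect in the equations satisfied by $(\bm p_h^\tau,u_h^\tau)$ relative to \eqref{primalmethod}, measured in the dual norms appearing on the right-hand side of \eqref{primalstability}, is $O(\rho^{1/2}R_p)$.

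Then I would close the argument by subtracting: write the equations for $(\bm p_h^\tau,u_h^\tau)$ tested against $(\bm q_h,v_h^c)\in\bm Q_h\times V_{h,0}^c$ and subtract those for $(\bm p_h^c,u_h^c)$, so that the difference $(\bm p_h^\tau-\bm p_h^c,\,u_h^\tau-u_h^c)$ solves the primal system with right-hand side equal to the defect terms just estimated. Applying the well-posedness bound \eqref{primalstability} to this difference — note $\nabla V_{h,0}^c\subset\nabla_h V_h\subset\bm Q_h$ by (a), so \eqref{primalstability} is available — yields
$$
\|\bm p_h^\tau-\bm p_h^c\|_{0,c}+\|u_h^\tau-u_h^c\|_1\lesssim \rho^{1/2}R_p,
$$
and combining this with the jump bound $\big(\sum_e h_e^{-1}\|\llbracket u_h^\tau-u_h^c\rrbracket\|_{0,e}^2\big)^{1/2}=\big(\sum_e h_e^{-1}\|\llbracket u_h^\tau\rrbracket\|_{0,e}^2\big)^{1/2}\lesssim \rho^{1/2}R_p$ (since $u_h^c$ is conforming, its jumps vanish) gives exactly \eqref{eq:grad-limit}, and letting $\rho\to0$ shows convergence. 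The main obstacle I anticipate is the middle step: converting the projected-jump control $\|\check{\mathcal Q}_h^p[u_h^\tau]_e\|$ into full-jump control $\|[u_h^\tau]_e\|$ with the correct $\rho^{1/2}$ scaling, which requires the same trace/inverse-inequality lemma used in the boundedness part of the proof of Theorem~\ref{thm:grad-infsup} and careful bookkeeping of the $\rho$ and $h_e$ powers; one must also check that the non-conforming parts of the bilinear form (the $\langle[u_h^\tau]_e,\{\bm q_h\}_e\rangle$ and $\langle\check u_h^\tau,[\bm q_h]\rangle$ terms) genuinely appear in the defect with the advertised smallness and are not hiding an $O(1)$ contribution, which is where condition (b), $\{\bm Q_h\}_e\subset\check Q_h$, is used.
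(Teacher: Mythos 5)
Your overall strategy --- treat \eqref{eq:XG} as a singular perturbation of the primal method, combine the uniform stability \eqref{eq:grad-stability} with the primal well-posedness \eqref{primalstability}, and extract the factor $\rho^{1/2}$ from the weights $\rho^{-1}h_e^{-1}$ in \eqref{eq:grad-norms} --- is the same as the paper's. But there are two concrete gaps. First, in your closing step you apply \eqref{primalstability} to the difference $(\bm p_h^\tau-\bm p_h^c,\,u_h^\tau-u_h^c)$. This is not legitimate: $u_h^\tau-u_h^c\notin V_{h,0}^c$, and the primal inf-sup bound only controls the $H^1$ norm of \emph{conforming} functions, not the broken $H^1$ norm of a discontinuous one. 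The paper's fix is to insert an arbitrary conforming interpolant $u_h^I\in V_{h,g}^c$, rewrite the error system as an equation for the pair $(\bm{\delta}_h^p,\,u_h^I-u_h^c)$ with the extra defect $(\nabla u_h^I-\nabla_h u_h^\tau,\bm q_h)$ moved to the right-hand side, apply \eqref{primalstability} to that pair, and then take the infimum over $u_h^I$ using the estimate $\inf_{u_h^I}\big(\|\nabla_h(u_h^\tau-u_h^I)\|_0^2+\sum_e h_e^{-1}\|\llbracket u_h^\tau-u_h^I\rrbracket\|_{0,e}^2\big)^{1/2}\lesssim\big(\sum_e h_e^{-1}\|\llbracket u_h^\tau\rrbracket\|_{0,e}^2\big)^{1/2}$; this is precisely where hypothesis (c), $V_h=V_h^k$ with $k\ge1$, enters. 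You mention the existence of a conforming approximation in passing but never integrate it into the final subtraction, so as written the argument does not close.

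Second, the mechanism you propose for the step you yourself flag as the main obstacle --- upgrading control of $\check{\mathcal Q}_h^p[u_h^\tau]_e$ to control of the full jump via the inequality $h_e^{-1}\|[u_h]_e\|_{0,e}^2\lesssim|\nabla_h u_h|_{0,\omega_e}^2+h_e^{-1}\|\check{\mathcal Q}_h^p[u_h]_e\|_{0,e}^2$ from the boundedness part of Theorem \ref{thm:grad-infsup} --- cannot deliver the claimed scaling: the term $|\nabla_h u_h^\tau|_{0,\omega_e}^2$ on its right-hand side is only $O(R_p^2)$, not $O(\rho R_p^2)$, so this route loses the factor $\rho^{1/2}$ entirely. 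The paper avoids the issue in the defect terms altogether: since $\{\bm q_h\}_e\in\check Q_h$ by hypothesis (b), only the projected jump ever appears in the error equation, and \eqref{eq:XG3} with $g_D=0$ gives the exact identity $\langle[u_h^\tau]_e,\{\bm q_h\}_e\rangle=\langle\rho h_e\check p_h^\tau,\{\bm q_h\}_e\rangle$, which is $O\big(\rho^{1/2}\|\check p_h^\tau\|_{0,\rho_h}\|\bm q_h\|_{0,c}\big)$ by Cauchy--Schwarz and a trace/inverse inequality. The unprojected jump then only enters through the target norm in \eqref{eq:grad-limit} and the conforming-approximation bound above, where the paper invokes $\big(\sum_e h_e^{-1}\|\llbracket u_h^\tau\rrbracket\|_{0,e}^2\big)^{1/2}\lesssim\rho^{1/2}\big(\|\check p_h^\tau\|_{0,\rho_h}+\|\check u_h^\tau\|_{0,\rho_h^{-1}}+\|u_h^\tau\|_{1,\rho_h}\big)$; justifying that step rests on $\check Q_h$ being rich enough relative to $V_h|_e$ (so that the $L^2$ projection essentially preserves the jump), not on the gradient-absorbing trace inequality you invoke.
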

\begin{proof}
Taking $v_h= v_h^c$ in the second equation in \eqref{eq:symm-DG}, we
see that 
\begin{equation} \label{HDG62}
\left\{
\begin{aligned}
(c\bm p_h^{\tau}, \bm q_h) +(\nabla_hu_h^{\tau}, \bm q_h)-\langle
[u_h^{\tau}]_e, \{\bm q_h\}_e\rangle +\langle \check
u_h^{\tau}, [\bm q_h]\rangle & = -\langle g_D, \bm
q_h\cdot \bm n\rangle_{\Gamma_D} \quad\qquad \forall \bm q_h\in \bm
Q_h,\\
(\bm p_h^{\tau}, \nabla_h v_h^c) &=-(f, v_h^c)+\langle g_N,
  v_h^c\rangle_{\Gamma_N}~~ \forall v_h^c\in V_{h,0}^c.
\end{aligned}
\right.
\end{equation}
Let 
$$ 
\bm{\delta}_{h}^p = \bm{p}_h^\tau - \bm{p}_h^c, \qquad \delta_h^u =
u_h^\tau - u_h^c.
$$
Subtracting \eqref{primalmethod} from the equation \eqref{HDG62}, we
have 
$$
\left\{
\begin{aligned}
(c\bm{\delta}_h^p,\bm q_h) + (\nabla_h \delta_h^u ,\bm q_h) &= \langle
[u_h^{\tau}]_e, \{\bm q_h\}_e\rangle-\langle \check
u_h^{\tau}, [\bm q_h]\rangle -\langle g_D, \bm q_h\cdot
\bm n\rangle_{\Gamma_D}\quad \forall \bm q_h\in \bm Q_h, \\
(\bm{\delta}_h^p,\nabla v_h^c)
&= 0 ~~\qquad\qquad\qquad\qquad\qquad\qquad\qquad\qquad \qquad\forall
v_h^c\in V_{h,0}^c.
\end{aligned}
\right.
$$
By the assumption $\{\bm Q_h\}_e\subset \check Q_h$ and noting that
$u_h^{\tau}$ satisfies \eqref{eq:XG3}, we have 
$$
\left\{
\begin{aligned}
(c\bm{\delta}_h^p,\bm q_h) + (\nabla_h \delta_h^u, \bm q_h) &= \langle
\rho h_e \check p_h^{\tau}, \{\bm q_h\}_e\rangle-\langle \check u_h^{\tau}, [\bm q_h]\rangle \quad
\forall \bm q_h\in \bm Q_h, \\
(\bm{\delta}_h^p,\nabla v_h^c)
&= 0  \qquad\qquad\qquad\qquad\qquad\qquad\forall v_h^c\in
V_{h,0}^c.
\end{aligned}
\right.
$$
Further, for any $u_h^I \in V_{h,g}^c$, we have 
\begin{equation}\label{error}
\left\{
\begin{aligned}
(c\bm{\delta}_h^p, \bm q_h) + (\nabla u_h^{I}-\nabla u^c_h,\bm q_h) &=
\langle \rho h_e \check p_h^{\tau}, \{\bm q_h\}_e\rangle-\langle \check u_h^{\tau}, [\bm q_h]\rangle +
(\nabla u_h^I - \nabla u_h^{\tau},\bm q_h) 
\quad \forall \bm q_h \in \bm Q_h, \\
(\bm{\delta}_h^p, \nabla v_h^c) &= 0 \qquad \qquad \qquad \qquad
\qquad \qquad \qquad \qquad
~\qquad\qquad \forall v_h^c \in V_{h,0}^c.
\end{aligned}
\right.
\end{equation}
By the assumption $\nabla_h V_h \subset \bm{Q}_h$, using
\eqref{primalstability}, trace inequality, inverse inequality and
Cauchy-Schwarz inequality, we obtain
\begin{equation}\label{error1}
\begin{aligned}
\|\bm{\delta}_h^p\|_{0,c}+\|u_h^I-u_h^c\|_1 & \leq
C_p\sup_{\bm q_h\in \bm Q_h \setminus \{0\}}\frac{ \langle \rho h_e
\check p_h^{\tau}, \{\bm q_h\}_e\rangle-\langle
\check u_h^{\tau}, [\bm q_h]\rangle + (\nabla u_h^I
- \nabla u_h^{\tau},\bm q_h)}{\|\bm q_h\|_{0,c}} \\
& \lesssim \|\nabla u_h^I - \nabla_hu_h^\tau\|_{0}
+\rho^{\frac{1}{2}}(\|\check p_h^{\tau}\|_{0,\rho_h}+\|\check u_h^{\tau}\|_{0,\rho_h^{-1}}). 
\end{aligned}
\end{equation}
Therefore, noting that $V_h=V_h^k~(k\ge 1)$, \eqref{error1} and
\eqref{eq:grad-stability} imply that 
$$ 
\begin{aligned}
&\|\bm{\delta}_h^p\|_{0,c}+(\|\nabla_h\delta_h^u\|_0^2 + \sum_{e\in
    \mathcal E_{h}}h_e^{-1}\|\llbracket \delta_h^u
    \rrbracket\|^2_{0,e})^{\frac{1}{2}}\\
& \leq \inf_{u_h^I \in V_{h,g}^c} 
\left( \|\bm{\delta}_h^p\|_{0,c} + \|\nabla_h(u_h^I - u_h^c)\|_{0} +
\big(\|\nabla_h(u_h^{\tau}-u_h^I)\|_0^2+\sum_{e\in
      \mathcal E_{h}}h_e^{-1}\|\llbracket u_h^{\tau}-u_h^I
      \rrbracket\|^2_{0,e}\big)^{\frac{1}{2}} \right) \\
& \lesssim  \rho^{\frac{1}{2}}(\|\check p_h^{\tau}\|_{0,\rho_h}+\|\check u_h^{\tau}\|_{0,\rho_h^{-1}}) +  \inf_{u_h^I \in
  V_{h,g}^c} \big(\|\nabla_h(u_h^{\tau}-u_h^I)\|_0^2+\sum_{e\in \mathcal E_{h}}h_e^{-1}\|\llbracket u_h^{\tau}-u_h^I \rrbracket\|^2_{0,e}\big)^{\frac{1}{2}} \\
& \lesssim  \rho^{\frac{1}{2}}(\|\check p_h^{\tau}\|_{0,\rho_h}+\|\check u_h^{\tau}\|_{0,\rho_h^{-1}}) + \big(\sum_{e\in \mathcal
  E_h} h_e^{-1} \|\llbracket u_h^{\tau} \rrbracket\|^2_{0,e}\big)^{\frac{1}{2}} \\
& \lesssim \rho^{\frac{1}{2}}(\|\check p_h^{\tau}\|_{0,\rho_h}+\|\check u_h^{\tau}\|_{0,\rho_h^{-1}}+\|u_h^{\tau}\|_{1,\rho_h})  
\lesssim \rho^{\frac12}R_p.
\end{aligned}
$$
This completes the proof.
\end{proof}

Next, having the divergence-based inf-sup condition, we discuss the
limiting of formulation \eqref{eq:XG} with $g_N=0$ in case of $\eta = (\rho h_e)^{-1}$, $\tau
\cong \eta^{-1} = \rho h_e$ as $\rho \to 0$.  Denote $\bm
H_{0,\Gamma_N}({\rm div}, \Omega)=\{\bm p\in \bm H({\rm div}, \Omega):
\bm p\cdot \bm n|_{\Gamma_N}=0\}$.  Consider the $\bm H({\rm div})$
conforming subspace $\bm Q^c_{h,g}:= \bm Q_h\cap \bm
H_{0,\Gamma_N}({\rm div}, \Omega)\subset \bm Q_h$, the mixed method
when applying to the Poisson equation \eqref{pois} can be written as:
Find $(\bm p^c_h, u^c_h) \in \bm Q^c_{h,g} \times V_h$ such that  
\begin{equation}\label{mixedmethod}
\left\{
\begin{aligned}
(c\bm p^c_h,\bm q^c_h) - (u^c_h,{\rm div} \bm q^c_h) &= G_m(\bm
q_h^c)~\qquad \forall \bm q^c_h\in \bm Q^c_{h,0},\\
 -({\rm div} \bm p^c_h, v_h)&= F_m(v_h) ~\qquad
   \forall v_h\in V_h,
\end{aligned}
\right.
\end{equation}
where $G_m(\bm q_h)=-\langle g_D, \bm q_h\cdot \bm
n\rangle_{\Gamma_D}, F_m(v_h) =-(f,v_h)$.
Then, by the fact that $ {\rm div} \bm Q^c_h= {\rm div}_h \bm
Q_h=V_h$, the well-posedness of the mixed method (cf.
\cite{brezzi1991mixed, boffi2013mixed}) implies that  
\begin{equation} \label{equ:well-posedness-mixed}
\|\bm p_h^c\|_{\bm H({\rm div})} + \|v_h^c\|_0 \lesssim
\sup_{\bm q_h^c \in \bm Q_{h,0}^c \setminus \{\bm 0\}}
\frac{G_m(\bm q_h^c)}{\|\bm
q_h^c\|_{\bm H({\rm div})}} + \sup_{v_h \in V_h \setminus \{0\}}
\frac{F_m(v_h)}{\|v_h\|_0}.
\end{equation}
We have the following theorem.

\begin{theorem}
Assume that the spaces $\bm{Q}_h$, $\check{V}_h$
and $V_h$ satisfy
\begin{enumerate}
\item [(a)] ${\rm div}_h \bm{Q}_h = V_h$;
\item [(b)] $\{V_h\} \subset \check V_h$; 
\item [(c)] $\bm Q_h=\bm Q_h^{k,RT}$ or $\bm Q_h^{k+1}$, $k\ge 0$.
\end{enumerate}
Then formulation \eqref{eq:XG} with $g_N=0$ and $\eta = (\rho h_e)^{-1}$, $\tau \cong
\eta^{-1} = \rho h_e$ converges to mixed method \eqref{mixedmethod} as
$\rho\to 0$. 
Further, let $(\bm p_h^{\eta}, \check p_h^{\eta}, u_h^{\eta}, \check
u_h^{\eta})$ be the solution of \eqref{eq:XG} and $(\bm p_h^c, u_h^c)$
be the solution of \eqref{mixedmethod}, we have 
\begin{equation}
\|\bm p_h^{\eta}-\bm p_h^c\|_{0,c}+\|{\rm div}_h(\bm p_h^{\eta}-\bm
p_h^c)\|_{0}+\|u_h^{\eta}-u_h^c\|_0\lesssim \rho^{\frac{1}{2}}R_m,
\end{equation}
where $R_m := \|f\|_0 + \|g_D\|_{-\frac{1}{2},\rho_h,\Gamma_D}$. 
\end{theorem}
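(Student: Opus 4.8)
The plan is to mirror the preceding proof with the roles of the potential $u$ and the flux $\bm p$ interchanged: ``conforming'' now means $\bm H({\rm div})$-conforming, and the mixed method \eqref{mixedmethod} plays the role that the primal method \eqref{primalmethod} played before. First I would extract from \eqref{eq:symm-ABCMDG} (with $\bar u_h(u_h)=\{u_h\}$, $\bar p_h(u_h,\bm p_h)=\{\bm p_h\}_e$ and $g_N=0$) the two equations satisfied by $(\bm p_h^{\eta},u_h^{\eta})$ that are comparable with \eqref{mixedmethod}. Testing the first equation of \eqref{eq:symm-ABCMDG} only against conforming fluxes $\bm q_h=\bm q_h^c\in\bm Q^c_{h,0}$ kills the term $\langle\{u_h^{\eta}\}+\check u_h^{\eta},[\bm q_h]\rangle$, since $[\bm q_h^c]=0$ on every interior edge, on $\Gamma_N$ (because $\bm q_h^c\cdot\bm n=0$ there) and on $\Gamma_D$ (by \eqref{eq:DG-notation}); this reproduces the first equation of \eqref{mixedmethod} verbatim for $(\bm p_h^{\eta},u_h^{\eta})$. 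For the second equation I would use \eqref{eq:dg-identity} to rewrite $(\bm p_h^{\eta},\nabla_h v_h)-\langle\{\bm p_h^{\eta}\}_e,[v_h]_e\rangle=-(v_h,{\rm div}_h\bm p_h^{\eta})+\langle[\bm p_h^{\eta}],\{v_h\}\rangle$ and then invoke \eqref{eq:XG4} (with $g_N=0$) together with the hypothesis $\{V_h\}\subset\check V_h$, so that $\{v_h\}$ is an admissible test function and $\langle[\bm p_h^{\eta}],\{v_h\}\rangle=\langle\rho h_e\check u_h^{\eta},\{v_h\}\rangle$. This gives a perturbed second mixed equation whose perturbation $\langle\rho h_e\check u_h^{\eta},\{v_h\}\rangle-\langle\check p_h^{\eta},[v_h]_e\rangle$ is, by Cauchy--Schwarz and the discrete trace inequality ($\sum_e\rho h_e\|\{v_h\}\|_{0,e}^2\lesssim\rho\|v_h\|_0^2$, and likewise for $[v_h]_e$), bounded by $\rho^{1/2}(\|\check u_h^{\eta}\|_{0,\rho_h}+\|\check p_h^{\eta}\|_{0,\rho_h^{-1}})\|v_h\|_0$.

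Next, setting $\bm\delta_h^p=\bm p_h^{\eta}-\bm p_h^c$ and $\delta_h^u=u_h^{\eta}-u_h^c$ and subtracting \eqref{mixedmethod} gives $(c\bm\delta_h^p,\bm q_h^c)-(\delta_h^u,{\rm div}\bm q_h^c)=0$ for all $\bm q_h^c\in\bm Q^c_{h,0}$ and $-(v_h,{\rm div}_h\bm\delta_h^p)=\langle\rho h_e\check u_h^{\eta},\{v_h\}\rangle-\langle\check p_h^{\eta},[v_h]_e\rangle$ for all $v_h\in V_h$. Since $\bm\delta_h^p$ is not itself conforming, I would insert an intermediate $\bm q_h^I\in\bm Q^c_{h,0}$ (an averaging/enriching approximation of $\bm p_h^{\eta}$, available because $\bm Q_h$ is an RT-type space by condition (c)), recast the error system as a mixed system for $(\bm q_h^I-\bm p_h^c,\delta_h^u)\in\bm Q^c_{h,0}\times V_h$ whose right-hand side is the above perturbation plus the consistency defects $(c(\bm p_h^{\eta}-\bm q_h^I),\cdot)$ and $({\rm div}_h(\bm p_h^{\eta}-\bm q_h^I),\cdot)$, and apply the mixed-method stability \eqref{equ:well-posedness-mixed}; this is legitimate because condition (c) makes $\bm R_h\times V_h$ a stable pair and conditions (a)--(b) give $\{{\rm div}_h\bm Q_h\}\subset\check V_h$, so Theorem~\ref{thm:div-infsup} and the bound \eqref{eq:div-stability} apply.

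Estimating the right-hand side and using the triangle inequality through $\bm q_h^I$, then taking the infimum over $\bm q_h^I\in\bm Q^c_{h,0}$, bounds $\|\bm p_h^{\eta}-\bm p_h^c\|_{0,c}+\|{\rm div}_h(\bm p_h^{\eta}-\bm p_h^c)\|_0+\|u_h^{\eta}-u_h^c\|_0$ by $\rho^{1/2}(\|\check u_h^{\eta}\|_{0,\rho_h}+\|\check p_h^{\eta}\|_{0,\rho_h^{-1}})$ plus the $\bm H({\rm div})$-distance of $\bm p_h^{\eta}$ to $\bm Q^c_{h,0}$, which for an RT-type space is controlled by the normal-flux jumps $(\sum_e h_e^{-1}\|[\bm p_h^{\eta}]_e\|_{0,e}^2)^{1/2}$. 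Finally, \eqref{eq:XG4} with $g_N=0$ yields $\check{\mathcal Q}_h^u[\bm p_h^{\eta}]=\rho h_e\check u_h^{\eta}$, and since the RT-type structure (condition (c)) together with $\{V_h\}\subset\check V_h$ forces $\check{\mathcal Q}_h^u$ to reproduce the full normal-flux jump on each interior edge, we get $(\sum_e h_e^{-1}\|[\bm p_h^{\eta}]_e\|_{0,e}^2)^{1/2}=\rho^{1/2}\|\check u_h^{\eta}\|_{0,\rho_h}$; combining and using \eqref{eq:div-stability} (so $\|\check u_h^{\eta}\|_{0,\rho_h}+\|\check p_h^{\eta}\|_{0,\rho_h^{-1}}\lesssim R_m$) gives the claimed $\lesssim\rho^{1/2}R_m$, and letting $\rho\to0$ gives convergence to \eqref{mixedmethod}.

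The hard part is the treatment of the flux nonconformity. The first XG equation collapses exactly onto the mixed method on conforming test fluxes, but the second one carries the perturbations $\langle\rho h_e\check u_h^{\eta},\{v_h\}\rangle$ and $\langle\check p_h^{\eta},[v_h]_e\rangle$, and the whole argument must be routed through an $\bm H({\rm div})$-conforming enriching operator; it is precisely here that the RT-type hypothesis (c) and the edge-space compatibility $\{V_h\}=\{{\rm div}_h\bm Q_h\}\subset\check V_h$ are used, both to make the enriching operator available with an $O(\rho^{1/2})$ divergence estimate and to guarantee that $\check{\mathcal Q}_h^u$ captures the entire normal-flux jump so that \eqref{eq:XG4} converts into the needed $O(\rho^{1/2})$ bound on $[\bm p_h^{\eta}]$.
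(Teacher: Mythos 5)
Your proposal is correct and follows essentially the same route as the paper's proof: restrict the first equation to $\bm H({\rm div})$-conforming test fluxes (so the jump term vanishes), rewrite the second equation via \eqref{eq:dg-identity} and convert $\langle[\bm p_h^{\eta}],\{v_h\}\rangle$ into $\langle\rho h_e\check u_h^{\eta},\{v_h\}\rangle$ using \eqref{eq:XG4} and $\{V_h\}\subset\check V_h$, then route the error system through an intermediate conforming flux $\bm p_h^I\in\bm Q^c_{h,g}$, apply the mixed-method stability \eqref{equ:well-posedness-mixed}, bound the distance to the conforming subspace by the normal-flux jumps, and close with \eqref{eq:div-stability}. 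The only cosmetic difference is that you justify the final jump bound by arguing $\check{\mathcal Q}_h^u$ reproduces the full jump, while the paper simply absorbs $(\sum_e h_e^{-1}\|[\bm p_h^{\eta}]\|_{0,e}^2)^{1/2}$ into $\rho^{1/2}\|\bm p_h^{\eta}\|_{{\rm div},\rho_h}$.
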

\begin{proof}
Taking $\bm q_h=\bm q_h^c$ in the first equation in
\eqref{eq:symm-DG}, we see that 
\begin{equation} \label{equ:WG-conforming} 
\left\{
\begin{aligned}
(c\bm p_h^{\eta}, \bm q_h^c) -(u_h^{\eta}, {\rm
div}\bm q_h^c) &= -\langle g_D, \bm q_h^c\cdot
\bm n\rangle_{\Gamma_D} \qquad\qquad \forall
\bm q_h^c\in \bm Q_{h,0}^c,\\
-({\rm div}_h\bm p_h^{\eta}, v_h)+\langle [\bm
p_h^{\eta}], \{v_h\}\rangle-\langle \check p_h^{\eta},
  [v_h]_e\rangle &= -(f, v_h)+\langle g_N,
  v_h\rangle_{\Gamma_N} ~\quad \forall v_h\in V_h.
\end{aligned}
\right.
\end{equation}
Let 
$$ 
\bm{\delta}_h^p = \bm{p}_h^\eta - \bm{p}_h^c, \quad 
\delta_h^u = u_h^\eta - u_h^c. 
$$ 
Subtracting \eqref{mixedmethod} from \eqref{equ:WG-conforming}, we
have 
$$
\left\{
\begin{aligned}
(c \bm{\delta}_h^p, \bm{q}^c_h) - (\delta_h^u,  {\rm div} \bm{q}^c_h)
 &= 0 \qquad\qquad\qquad\qquad\qquad\qquad\qquad\qquad\quad
 ~\forall \bm{q}^c_h\in \bm Q^c_{h,0},\\
({\rm div}_h\bm{\delta}_h^p, v_h) &= -\langle [\bm p_h^{\eta}],
\{v_h\}\rangle+\langle \check p_h^{\eta},
[v_h]_e\rangle+\langle g_N, v_h\rangle_{\Gamma_N}
\quad \forall v_h \in V_h. 
\end{aligned}
\right.
$$
By the assumption $\{V_h\}\subset \check V_h$ and noting that $\bm
p_h^{\eta}$ satisfies \eqref{eq:XG4}, we have 
$$
\left\{
\begin{aligned}
(c \bm{\delta}_h^p, \bm{q}^c_h) - (\delta_h^u,  {\rm div}
\bm{q}^c_h) &= 0 \qquad \qquad\qquad\qquad\qquad\qquad ~\forall
\bm{q}^c_h\in \bm Q^c_{h,0},\\
({\rm div}_h \bm{\delta}_h^p, v_h) &=
-\langle\rho h_e \check u_h^{\eta}, \{v_h\}\rangle + \langle \check p_h^{\eta}, [v_h]_e\rangle \quad
  \forall
v_h \in V_h. 
\end{aligned}
\right.
$$
Further, for any  $\bm{p}^{I}_h\in \bm Q^c_{h,g}$, 
\begin{equation}\label{mixederror1}
\left\{
\begin{aligned}
(c (\bm{p}^{I}_h-\bm{p}^{c}_h), \bm{q}^c_h) - (\delta_h^u,  {\rm div}
\bm{q}^c_h) &=
(c(\bm{p}^{I}_h-\bm{p}^{\eta}_h),\bm{q}^c_h)
\qquad\qquad\qquad \qquad\qquad\qquad\qquad\quad \forall \bm{q}^c_h\in
\bm Q^c_{h,0},\\
({\rm div} (\bm{p}^{I}_h-\bm{p}^{c}_h),v_h) &= -\langle\rho h_e \check
u_h^{\eta}, \{v_h\}\rangle+\langle \check p_h^{\eta},
[v_h]_e\rangle +({\rm
div}(\bm{p}^{I}_h-\bm{p}^{\eta}_h),v_h) \quad \forall v_h
\in V_h. 
\end{aligned}
\right.
\end{equation}
By the well-posedness of the mixed methods
\eqref{equ:well-posedness-mixed}, trace inequality, inverse inequality
and Cauchy-Schwarz inequality, we have
\begin{equation} \label{eq:div-error1} 
\begin{aligned}
& \|\bm{p}^{I}_h-\bm{p}^{c}_h\|_{\bm H({\rm div}) }+\|\delta_h^u\|_0
\\
\leq &~ C_m\Big( \sup_{\bm{q}^c_h\in \bm{Q}^c_{h,0} \setminus \{\bm 0\}}
\frac{(c(\bm{p}^{I}_h - \bm{p}^{\eta}_h),
  \bm{q}^c_h)}{\|\bm{q}^c_{h}\|_{\bm H({\rm div})}} 
\\ 
&\quad~~ +\sup_{v_h\in V_h\setminus \{ 0\}} \frac{-\langle\rho h_e
\check u_h^{\eta}, \{v_h\}\rangle+\langle \check
p_h^{\eta}, [v_h]_e\rangle+ ({\rm
  div}(\bm{p}^{I}_h-\bm{p}^{\eta}_h),v_h)}{\|v_h\|_0} \Big)\\
\lesssim &~ 
\|\bm{p}^{I}_h-\bm{p}^{\eta}_h\|_{0,c} +\|{\rm
div}_h(\bm{p}^{I}_h-\bm{p}^{\eta}_h)\|_0
+\rho^{\frac{1}{2}}(\|\check u_h^{\eta}\|_{0,\rho_h}+\|\check p_h^{\eta}\|_{0,\rho_h^{-1}}).
\end{aligned}
\end{equation}
Hence, by \eqref{eq:div-error1} and \eqref{eq:div-stability}, we have
$$ 
\begin{aligned}
& \quad~ \|\bm \delta_h^{p}\|_{0,c} +\|{\rm div}_h \bm \delta_h^p\|_0 +
\|\delta_h^u\|_0\\
& \lesssim \rho^{\frac{1}{2}}(\|\check u_h^{\eta}\|_{0,\rho_h}+\|\check p_h^{\eta}\|_{0,\rho_h^{-1}})+ \inf_{\bm p_h^I \in \bm Q_{h,g}^c}
\left( \|\bm{p}^{I}_h-\bm{p}^{\eta}_h\|_{0,c} + \|{\rm
div}_h(\bm{p}^{I}_h-\bm{p}^{\eta}_h)\|_{0,c} \right) \\
& \lesssim \rho^{\frac{1}{2}}(\|\check u_h^{\eta}\|_{0,\rho_h}+\|\check p_h^{\eta}\|_{0,\rho_h^{-1}}) + (\sum_{e\in \mathcal{E}_h} h_e^{-1}\|[\bm
p_h^\eta]\|^2_{0,e})^{\frac{1}{2}}\\
&\lesssim \rho^{\frac{1}{2}}\big(\|\check u_h^{\eta}\|_{0,\rho_h}+\|\check p_h^{\eta}\|_{0,\rho_h^{-1}}+\|\bm p_h^{\eta}\|_{{\rm div},\rho_h}\big) 
\lesssim \rho^{\frac12}R_m.
\end{aligned}
$$
This completes the proof. 
\end{proof}

\section{Unified Extended Galerkin Analysis of Existing Methods}
In this section, we exploit the relationship between the
formulation \eqref{eq:XG} and several existing numerical methods, which leads to
the well-posedness and error estimates of the existing numerical
methods.  We consider three different variants of the 4-field system
\eqref{eq:XG} by eliminating either $\check p_h$ or $\check u_h$, or
both.  


\subsection{Eliminating $\check{p}_h$} \label{subsec:HDG}
By \eqref{eq:XG3}, we have the explicit expression of $\check{p}_h$ as 
\begin{equation} \label{eq:HDG-hatp}
\check{p}_h =
\left\{
\begin{array}{ll}
\tau \check{\mathcal Q}_h^p [u_h]_e &\hbox{on}~~ \mathcal
E_h^i,\\
\tau \check{\mathcal Q}_h^p(u_h-g_D) &\hbox{on}~~ \Gamma_D, \\
0 &\hbox{on}~~\Gamma_N.
\end{array}
\right.
\end{equation}
Then formulation \eqref{eq:XG} \eqref{eq:XG} is reduced to 
\begin{equation} \label{eq:hdg-compact} 
\left\{
\begin{aligned}
a_{\rm H}(\bm p_h, \bm q_h) + b_{\rm H}(\bm q_h, \tilde{u}_h) &=
-\langle g_D,\boldsymbol {q}_h\cdot\boldsymbol n\rangle_{\Gamma_D}
\qquad\qquad\qquad\qquad\qquad\qquad\quad~ \forall \bm q_h \in \bm
Q_h, \\
b_{\rm H}(\bm p_h, \tilde{v}_h) - c_{\rm H}(\tilde{u}_h, \tilde{v}_h)
&= -(f, v_h)+\langle g_N, v_h+\check v_h\rangle_{\Gamma_N}-\langle
\tau \check{\mathcal Q}_h^p g_D,v_h\rangle_{\Gamma_D}\quad \forall
\tilde{v}_h \in \tilde V_h, 
\end{aligned}
\right.
\end{equation} 
where 
$$ 
\begin{aligned}
a_{\rm H}(\bm p_h, \bm q_h) &= (c\bm{p}_h, \bm{q}_h), \\
b_{\rm H}(\bm q_h, \tilde{u}_h) &= -(u_h,
{\rm div}_h\bm{q}_h) + \langle\check u_h+\{u_h\}, [\bm q_h]\rangle, \\
c_{\rm H}(\tilde{u}_h, \tilde{v}_h) &= \langle \eta^{-1}\check u_h, \check v_h \rangle + \langle 
\tau \check{\mathcal Q}_h^p[u_h]_e, \check{\mathcal Q}_h^p[v_h ]_e\rangle.
\end{aligned}
$$ 
Now let us transform $\hat u_h := \check{\mathcal Q}_h^u\{u_h\}+\check
u_h$, then we can rewrite the above formulation as: Find $(\boldsymbol
p_h, u_h, \hat u_h)\in \boldsymbol Q_h\times V_h\times \check V_h$
such that 
\begin{equation} \label{eq:hdg} 
\left\{
\begin{aligned}
a_{\rm H}(\bm p_h, \bm q_h) + b_{\rm H}(\bm q_h; {u}_h, \hat u_h) &=
-\langle g_D,\boldsymbol {q}_h\cdot\boldsymbol n\rangle_{\Gamma_D}
~~\qquad\qquad\qquad\qquad\qquad\qquad \qquad \qquad \quad \forall \bm
q_h \in \bm Q_h, \\
b_{\rm H}(\bm p_h; v_h, \hat{v}_h) - c_{\rm H}(u_h, \hat{u}_h;
    v_h,\hat{v}_h) &= -(f, v_h)+\langle g_N, v_h+\hat
v_h-\check{\mathcal Q}_h^u\{v_h\}\rangle_{\Gamma_N}-\langle \tau
\check{\mathcal Q}_h^pg_D,v_h\rangle_{\Gamma_D}  ~~\forall (v_h,
    \hat{v}_h) \in
\tilde V_h, 
\end{aligned}
\right.
\end{equation} 
where 
$$ 
\begin{aligned}
a_{\rm H}(\bm p_h, \bm q_h) &= (c\bm{p}_h, \bm{q}_h), \\
b_{\rm H}(\bm q_h; {u}_h, \hat u_h) &= -(u_h,
{\rm div}_h\bm{q}_h) + \langle\hat u_h-\check{\mathcal Q}_h^u\{u_h\}+\{u_h\}, [\bm q_h]\rangle, \\
 c_{\rm H}(u_h, \hat{u}_h; v_h,\hat{v}_h) &= \langle \eta^{-1}(\hat u_h-\check{\mathcal Q}_h^u\{u_h\}), \hat v_h-\check{\mathcal Q}_h^u\{v_h\} \rangle + \langle \tau
\check{\mathcal Q}_h^p[u_h]_e, \check{\mathcal Q}_h^p[ v_h]_e \rangle.
\end{aligned}
$$ 
The resulting three-field formulation \eqref{eq:hdg-compact} is a
generalization of the stabilized hybrid mixed method
\cite{hong2017unified}, or some special cases of the HDG method
\cite{cockburn2004characterization, cockburn2009unified,
cockburn2010projection, lehrenfeld2010hybrid, oikawa2015hybridized}.

\paragraph{Some special cases:}
More precisely, under the conditions that  $\eta=\frac{1}{4}\tau^{-1}$
and $\check{V}_h = \check{Q}_h$, \eqref{eq:hdg-compact} is shown to
be the standard HDG method \cite{cockburn2004characterization,
  cockburn2009unified, cockburn2010projection}, if  
\begin{equation} \label{eq:XG2standardHDG}
\bm Q_h\cdot \bm n_e |_{\mathcal E_h}\subset \check V_h \qquad
\text{and} \qquad V_h|_{\mathcal E_h}\subset \check V_h.
\end{equation}
Under the condition \eqref{eq:XG2standardHDG} and $\eta =\frac{1}{4} \tau^{-1}$, using the identity
\eqref{eq:dg-identity_2}, a hybridizable formulation of \eqref{eq:hdg} is obtained: Find
$(\bm p_h, u_h, \hat u_h)\in \bm Q_h\times V_h\times \check V_h$ such
that for any $(\bm q_h, v_h, \hat{v}_h) \in \bm Q_h\times V_h\times
\check V_h$
\begin{equation} \label{eq:hdg-H} 
\left\{
\begin{aligned}
(c\bm p_h, \bm q_h) -(u_h, {\rm div}_h\bm{q}_h) + \langle\hat u_h, \bm
q_h\cdot \bm n\rangle_{\partial\mathcal T_h}&=  -\langle g_D,\bm
{q}_h\cdot\bm n\rangle_{\Gamma_D},\\
-({\rm div}_h\bm{p}_h, v_h)+\langle 2\tau (\hat u_h-\check{\mathcal
    Q}_h^u u_h),\check{\mathcal Q}_h^u v_h \rangle_{\partial \mathcal
  T_h}  &=-(f, v_h)+\langle g_N, v_h-\check{\mathcal
    Q}_h^u\{v_h\}\rangle_{\Gamma_N}-\langle \tau \check{\mathcal
      Q}_h^pg_D,v_h\rangle_{\Gamma_D} ,\\
\langle\bm p_h\cdot \bm n,\hat v_h\rangle_{\partial\mathcal T_h} -
\langle 2\tau (\hat u_h-\check{\mathcal Q}_h^u u_h), \hat
v_h\rangle_{\partial \mathcal T_h}&=\langle g_N, \hat
v_h\rangle_{\Gamma_N}.
\end{aligned}
\right.
\end{equation} 
The above formulation shows that $\bm p_h$ and $u_h$ can be
represented by $\hat u_h$ locally from the first and the second
equations. As a result, a globally coupled equation solely for $\hat
u_h$ on $\mathcal E_h$ can be obtained.

Moreover, \eqref{eq:hdg} reduces to the HDG with reduced
stabilization method \cite{lehrenfeld2010hybrid, oikawa2015hybridized}
if 
\begin{equation} \label{eq:XG2reducedHDG}
\bm Q_h\cdot \bm n_e |_{\mathcal E_h}\subset \check V_h.
\end{equation}
Specific choices of the discrete space and the corresponding numerical
methods are summarized in Table \ref{tab:XG2HDG}.  We refer to
\cite{hong2017unified} for discussion from the HDG to the hybrid mixed
methods \cite{arnold1985mixed, brezzi1991mixed,
cockburn2004characterization} and the mixed methods
\cite{raviart1977mixed, nedelec1980mixed, brezzi1985two,
brezzi1987mixed, brezzi1991mixed, boffi2013mixed}.


\begin{remark}
We should note that the uniform inf-sup condition for the HDG method when $\eta =\frac{1}{4}\tau^{-1}=\mathcal O(1)$, $\bm Q_h=\bm Q_h^{k}$, $V_h=V_h^{k}$, $\hat{V}_h=\hat{V}_h^{k}$ is not proved in Section \ref{sec:well-posedness}.
\end{remark}


\paragraph{Minimal stabilized divergence-based method.} In light of
Theorem \ref{thm:div-infsup}, the divergence-based inf-sup condition
holds for {\it any $\check Q_h$}.  Hence, when choosing $\check{Q}_h =
\{0\}$, the formulation \eqref{eq:hdg-compact} reduces to a stabilized
divergence-based method with minimal stabilization, which reads: Find
$(\bm p_h, u_h, \check u_h)\in \bm Q_h\times V_h\times\check V_h$,
such that for any $(\bm q_h, v_h, \check v_h)\in \bm Q_h\times
V_h\times\check V_h$
\begin{equation} \label{eq:newhdg} 
\left\{
\begin{aligned}
(c\bm{p}_h, \bm{q}_h) - (u_h, {\rm div}_h\bm{q}_h) + \langle\check
u_h + \{u_h\}, [\bm q_h]\rangle&= -\langle g_D,\bm
{q}_h\cdot\bm n\rangle_{\Gamma_D}, \\
-( {\rm div}_h\bm{p}_h,v_h) +
\langle[\bm p_h],\check v_h + \{v_h\} \rangle 
-\langle \eta^{-1}\check u_h, \check v_h \rangle
&= -(f, v_h)+\langle g_N, \check v_h+v_h\rangle_{\Gamma_N}.
\end{aligned}
\right.
\end{equation} 
Consequently, the scheme \eqref{eq:newhdg} is stable provided that
$\bm{Q}_h$, $V_h$ and $\check V_h$ satisfy the conditions in Theorem
\ref{thm:div-infsup}.

Further, by assuming $\bm{Q}_h \cdot \bm{n}_e|_{\mathcal E_h} \subset
\check V_h$ and eliminating $\check u_h$ (see \eqref{eq:WG-hatu}
below), we obtain the mixed DG method \cite{hong2017unified}: Find
$(\bm p_h, u_h)\in \bm Q_h\times V_h$ such that for any $(\bm q_h,
v_h) \in \bm Q_h\times V_h$
\begin{equation} \label{eq:mixedDG}
\left\{
\begin{aligned}
(c \bm p_h, \bm q_h) + \langle \eta[\bm p_h], [\bm
q_h]\rangle  
+ (\nabla_h u_h,  \bm q_h) 
-\langle\llbracket u_h \rrbracket,
\{\vect{q_h}\}\rangle& =-\langle g_D, \bm q_h\cdot
\bm n\rangle_{\Gamma_D}+\langle\eta g_N, \bm q_h\cdot
\bm n\rangle_{\Gamma_N}, \\
(\bm {p_h},\nabla_h v_h)
-\langle\{\vect{p_h}\}, \llbracket v_h \rrbracket\rangle 
&=-(f,v_h)+\langle g_N, v_h\rangle_{\Gamma_N}.
\end{aligned}
\right.
\end{equation} 
This implies that the mixed DG method proposed in
\cite{hong2017unified} can be interpreted as the minimal stabilized
divergence-based method.  

\paragraph{Mixed method.} 
Finally, we remark that, if we take $\tau \to 0$ and choose $\bm
Q_h\times V_h\times \check V_h=\bm Q_h^{k+1}\times V_h^k\times \check
V_h^{k+1}$ or $\bm Q_h\times V_h\times \check V_h=\bm Q_h^{k,RT}\times
V_h^k\times \check V_h^{k}$, the \eqref{eq:hdg-compact} implies the
mixed method by eliminating $\hat{u}_h$.

\subsection{Eliminating $\check{u}_h$} \label{subsec:WG}
By \eqref{eq:XG4}, we have the explicit expression of $\check{u}_h$ as 
\begin{equation}\label{eq:WG-hatu}
\check u_h = 
\left\{
\begin{array}{ll}
\eta\check{\mathcal Q}_h^u [\bm p_h]&  \hbox{on}~ \mathcal
E_h^i, \\
0 & \hbox{on}~\Gamma_D, \\
\eta\check{\mathcal Q}_h^u (\bm p_h\cdot\bm n-g_N) &
\hbox{on}~ \Gamma_N .
\end{array}
\right.
\end{equation}
Then formulation \eqref{eq:XG} can be recast as  
\begin{equation} \label{eq:wg-compact} 
\left\{
\begin{aligned}
a_w(\tilde{\bm p}_h, \tilde{\bm q}_h) + b_w(\tilde{\bm q}_h, u_h) &=
-\langle g_D,\bm {q}_h\cdot\bm n+\check q_h\rangle_{\Gamma_D} +\langle
\eta \check{\mathcal Q}_h^ug_N,\bm {q}_h\cdot\bm n\rangle_{\Gamma_N}
\quad \forall \tilde{\bm q}_h \in \tilde{\bm Q}_h, \\
b_w(\tilde{\bm p}_h, v_h) &= -(f, v_h)+\langle
g_N,v_h\rangle_{\Gamma_N} \qquad\qquad\qquad\qquad\qquad\forall v_h
\in V_h, 
\end{aligned}
\right.
\end{equation} 
where 
$$ 
\begin{aligned}
a_w(\tilde{\bm p}_h, \tilde{\bm q}_h) &= (c\bm{p}_h, \bm{q}_h)+
\langle\eta\check{\mathcal Q}_h^u[\bm{p}_h], \check{\mathcal
Q}_h^u[\bm{q}_h] \rangle  + \langle \tau^{-1}\check
p_h, \check q_h\rangle, \\
b_w(\tilde{\bm q}_h, u_h) &= (\nabla_h u_h, \bm{q}_h) -
\langle [u_h]_e, \check q_h + \{\vect{q}_h\}_e\rangle.
\end{aligned}
$$ 
Now let us transform $\hat p_h :=\check{\mathcal Q}_h^p \{\bm p_h\}_e
+ \check p_h$, then we can rewrite the above formulation as: Find
$(\bm p_h, \hat p_h, u_h)\in \bm Q_h\times \check Q_h\times V_h$ such
that 
\begin{equation} \label{eq:wg} 
\left\{
\begin{aligned}
a_{w}(\bm p_h,\hat{p}_h;\bm q_h, \hat{q}_h) + b_{w}(\bm q_h, \hat q_h;
    {u}_h) &=  -\langle g_D,\bm {q}_h\cdot\bm n+\hat
q_h-\check{\mathcal Q}_h^p \{\bm q_h\}_e\rangle_{\Gamma_D}+\langle \eta
\check{\mathcal Q}_h^ug_N,\bm {q}_h\cdot\bm n\rangle_{\Gamma_N} ~~ 
\forall (\bm q_h, \hat q_h) \in \tilde{\bm Q}_h, \\
b_{ w}(\bm p_h, \hat{p}_h; v_h)&= -(f, v_h)+\langle g_N,
  v_h\rangle_{\Gamma_N}
\qquad\qquad\qquad\qquad\qquad\qquad\qquad ~~ \forall v_h \in V_h, 
\end{aligned}
\right.
\end{equation} 
where 
$$ 
\begin{aligned}
a_{w}(\bm p_h,\hat{p}_h;\bm q_h, \hat{q}_h) &= (c\bm{p}_h, \bm{q}_h)+
\langle \tau^{-1}(\hat p_h-\check{\mathcal Q}_h^p \{\bm p_h\}_e), \hat
q_h-\check{\mathcal Q}_h^p \{\bm
q_h\}_e\rangle + \langle\eta \check{\mathcal
  Q}_h^u[\bm{p}_h], \check{\mathcal Q}_h^u[\bm{q}_h] \rangle, \\
b_{w}(\bm q_h, \hat q_h; {u}_h) &= (\nabla_h u_h, \bm{q}_h) - \langle
[u_h]_e, \hat q_h-\check{\mathcal Q}_h^p \{\bm q_h\}_e+
\{\bm q_h\}_e \rangle.
\end{aligned}
$$ 
The resulting three-field formulation \eqref{eq:wg-compact} is a
generalization of the stabilized hybrid primal method
\cite{hong2017unified}, or some special cases of the WG-MFEM method \cite{wang2014weak}. 

\paragraph{Some special cases:}Again, 
under the conditions that $\tau=\frac{1}{4}\eta^{-1}$ and $\check{Q}_h =
\check{V}_h$, \eqref{eq:wg} is the WG-MFEM method \cite{wang2014weak}, if  
\begin{equation} \label{eq:XG2WG}
\bm Q_h\cdot \bm n_e|_{\mathcal E_h}\subset \check Q_h.
\end{equation}
That is, we have the following formulation: Find $(\bm p_h, \hat p_h,
u_h)\in \bm Q_h\times \check Q_h\times V_h$ such that for any $(\bm
q_h, \hat q_h, v_h)\in \bm Q_h\times \check Q_h\times V_h$
\begin{equation} \label{eq:wg-H} 
\left\{
\begin{aligned}
(c\bm{p}_h, \bm{q}_h)- \langle 2\eta(\hat p_h-\bm p_h\cdot\bm n), \bm
q_h\cdot\bm n\rangle_{\partial\mathcal T_h}+ (\nabla_h u_h,
\bm{q}_h)&= -\langle g_D,\bm {q}_h\cdot\bm n-\check{\mathcal
Q}_h^p \{\bm q_h\}_e\rangle_{\Gamma_D}+\langle\eta
\check{\mathcal Q}_h^ug_N,\bm {q}_h\cdot\bm n\rangle_{\Gamma_N}, \\
(\bm{p}_h,\nabla_h v_h) - \langle \hat p_h,
v_h\rangle_{\partial\mathcal T_h}&= -(f, v_h)+\langle g_N,
v_h\rangle_{\Gamma_N},\\
 \langle2\eta (\hat p_h-\bm p_h\cdot \bm n), \hat q_h\rangle_{\partial
   \mathcal T_h}  - \langle u_h, \hat q_h\rangle_{\partial \mathcal
     T_h} &=  -\langle g_D,\hat q_h\rangle_{\Gamma_D}. \\
\end{aligned}
\right.
\end{equation} 
Several possible discrete spaces for \eqref{eq:wg-H} are 
$$
V_h=V_h^{k+1},\bm Q_h=\bm Q_h^{k},\check Q_h=\check Q_h^{k}, \quad
\text{or} \quad  V_h=V_h^k,\bm Q_h=\bm Q_h^{k,RT},\check Q_h=\check
Q_h^k.
$$
We refer to \cite{hong2017unified} for discussion from the WG to the
hybrid primal methods \cite{raviart1975hybrid, wolf1975alternate,
raviart1977primal} and the primal methods \cite{argyris1954energy,
turner1956stiffness, mikhlin1964variational, cea1964approximation,
pian1964derivation, Feng1965finite, crouzeix1973conforming}.

\paragraph{Minimal stabilized gradient-based method.} In light of
Theorem \ref{thm:grad-infsup}, the gradient-based inf-sup condition
holds for {\it any $\check V_h$}. Hence, we relax the condition in WG
by choosing $\check V_h=\{0\}$ in \eqref{eq:wg-compact} to obtain a
stabilized gradient-based method with minimal stabilization, which
reads: Find $(\bm p_h, \check p_h, u_h)\in \bm Q_h\times \check
Q_h\times V_h$, such that for any $(\bm q_h, \check q_h, v_h)\in \bm
Q_h\times \check Q_h\times V_h$
\begin{equation} \label{equ:newwg} 
\left\{
\begin{aligned}
(c\bm{p}_h, \bm{q}_h) + \langle\tau^{-1}\check p_h, \check q_h
\rangle + (\nabla_h u_h, \bm{q}_h) - \langle [u_h]_e,
  \check{q}_h + \{\bm q_h\}_e\rangle &=  -\langle
  g_D,\bm {q}_h\cdot\bm n+\check q_h\rangle_{\Gamma_D},\\
(\bm{p}_h,\nabla_h v_h) - \langle
 \check{p}_h + \{\bm{p}_h\}_e,[v_h]_e\rangle
&= -(f, v_h)+\langle g_N,v_h\rangle_{\Gamma_N}.
\end{aligned}
\right.
\end{equation} 
Consequently, the scheme \eqref{equ:newwg} is also stable provided
that $\bm{Q}_h$, $\check Q_h$ and $V_h$ satisfy the conditions in
Theorem \ref{thm:grad-infsup}. Further, by the elimination of $\check
p_h$ using \eqref{eq:HDG-hatp}, we obtain an LDG method \cite{cockburn1998local} in mixed
form: Find $(\bm p_h, u_h)\in \bm Q_h\times V_h$ such that
for any $(\bm q_h, v_h) \in \bm Q_h\times V_h$
\begin{equation} \label{eq:LDG-0}
\left\{
\begin{aligned}
(c \bm p_h, \bm q_h) + (\nabla_h u_h,  \bm q_h)-\langle\llbracket u_h
\rrbracket, \{\vect{q_h}\}\rangle& =-\langle g_D, \bm
q_h\cdot \bm n\rangle_{\Gamma_D}, \\
(\bm {p_h},\nabla_h v_h)
-\langle\{\vect{p_h}\}, \llbracket v_h \rrbracket\rangle 
-\langle\tau\check{\mathcal Q}_h^p[u_h]_e, \check{\mathcal
  Q}_h^p[v_h]_e\rangle &=-(f,v_h)+\langle g_N, v_h\rangle_{\Gamma_N}+\langle \tau  \check{\mathcal Q}_h^pg_D, v_h\rangle_{\Gamma_D}.
\end{aligned}
\right.
\end{equation} 

\begin{table}[!htbp]
\centering
\begin{tabular}{cccc|c|c}
\hline
 $\bm{Q}_h$ & $\check{Q}_h$ & $V_h$ & $\check{V}_h$& reference &
 inf-sup condition \\ 
\hline
 $\bm Q_h^k$ & $\check{Q}_h^{k+1}$  & $V_h^{k+1}$ &$\check{V}_h^{k+1}$ & HDG in \cite{lehrenfeld2010hybrid} & gradient-based\\
\hline
$\bm Q_h^{k+1}$ & $\check{Q}_h^{k+1}$ & $V_h^{k}$ & $\check{V}_h^{k+1}$&HDG in \cite{ cockburn2009unified} & divergence-based \\
\hline
$\bm Q_h^{k,{\rm RT}}$  & $\check{Q}_h^{k}$& $V_h^{k}$ & $\check{V}_h^{k}$&HDG in \cite{ cockburn2009unified}  & divergence-based \\
\hline
 $\bm Q_h^k$  & $\check{Q}_h^{k}$ & $V_h^{k+1}$ &
$\check{V}_h^k$&HDG with reduced stabilization in \cite{lehrenfeld2010hybrid, oikawa2015hybridized} & gradient-based\\
\hline
$\bm Q_h^{k}$ & $\check{Q}_h^{k}$ & $V_h^{k}$ & $\check{V}_h^{k}$&HDG in \cite{cockburn2008superconvergent,cockburn2010projection}
& not proved \\
\hline 
 $\bm Q_h^{k+1}$  & $\{0\}$ & $V_h^{k}$ &
$\check{V}_h^k$&Mixed DG in \cite{hong2017unified} & divergence-based\\
\hline 
$\bm Q_h^{k,{\rm RT}}$  & $\check{Q}_h^{k}$& $V_h^{k}$ & $\check{V}_h^{k+1}$&WG in \cite{wang2013weak} & divergence-based \\
\hline
 $\bm Q_h^k$  & $\check{Q}_h^{k}$ & $V_h^{k+1}$ &
$\check{V}_h^k$&WG-MFEM in \cite{wang2014weak} & gradient-based \\
\hline
 $\bm Q_h^k$  & $\check{Q}_h^{k}$ & $V_h^{k+1}$ &
$\{0\}$&LDG in \cite{cockburn1998local} & gradient-based \\
\hline
\end{tabular}
\caption{From \eqref{eq:XG} to existing methods 
} 
\label{tab:XG2HDG}
\end{table}


\paragraph{Primal method.}
We remark that, if we take $\eta \to 0$ and choose $\bm Q_h\times
\check Q_h\times V_h=\bm Q_h^{0}\times \check Q_h^{0}\times V_h^1$,
the WG method \eqref{eq:wg} is equivalent to the nonconforming finite
element method discretized by Crouzeix-Raviart element. However, when
choosing choose $\bm Q_h\times \check Q_h\times V_h=\bm Q_h^{1}\times
\check Q_h^{1}\times V_h^2$ and taking $\eta\to 0$ , the WG method
\eqref{eq:wg} is getting unstable. In this case, the stabilization is
needed for the hybrid primal method which induces to the WG method.

\subsection{Eliminating both $\check{p}_h$ and $\check{u}_h$} \label{subsec:DG}
Plugging in \eqref{eq:HDG-hatp} and \eqref{eq:WG-hatu} into
\eqref{eq:XG3} and \eqref{eq:XG4}, respectively,  we obtain a DG
method: Find $(\bm p_h, u_h)\in \bm Q_h\times V_h$ such that for any
$(\bm q_h, v_h) \in \bm Q_h\times V_h$
\begin{equation} \label{eq:newDG}
\left\{
\begin{aligned}
(c \bm p_h, \bm q_h) + (\nabla_h u_h,  \bm q_h) + \langle
\eta\check{\mathcal Q}_h^u[\bm p_h], \check{\mathcal Q}_h^u[\bm
q_h]\rangle-\langle\llbracket u_h \rrbracket,
\{\vect{q_h}\}\rangle& =-\langle g_D, \bm q_h\cdot
\bm n\rangle_{\Gamma_D}+\langle \eta \check {\mathcal  Q}_h^u g_N, \bm q_h\cdot
\bm n\rangle_{\Gamma_N}, \\
(\bm {p_h},\nabla_h v_h)
-\langle\{\vect{p_h}\}, \llbracket v_h \rrbracket\rangle 
-\langle\tau \check{\mathcal Q}_h^p[u_h]_e, \check{\mathcal
  Q}_h^p[v_h]_e\rangle &=-(f,v_h)+\langle g_N, v_h\rangle_{\Gamma_N}
 +\langle \tau  \check{\mathcal Q}_h^pg_D, v_h\rangle_{\Gamma_D}.
\end{aligned}
\right.
\end{equation} 
We note that \eqref{eq:newDG} is equivalent to the formulation
\eqref{eq:XG}.  Firstly, the solution $\bm p_h, u_h$ obtained from
\eqref{eq:XG} coincides the solution of \eqref{eq:newDG}. On the other
hand, having the solution $\bm p_h, u_h$ of \eqref{eq:newDG}, by using
\eqref{eq:HDG-hatp} and \eqref{eq:WG-hatu}, we can construct
$\check{p}_h$ and $\check{u}_h$. It is straightforward to show that
$(\bm p_h, u_h,\check{p}_h,\check{u}_h)$ is the solution of
\eqref{eq:XG}. If the choice of the spaces $\bm Q_h, \check V_h, V_h, \check Q_h$ 
satisfying $[\bm Q_h]\subset \check V_h$ and $[V_h]\subset \check Q_h$, then the 
projections $\check{\mathcal Q}_h^u$ and $\check{\mathcal Q}_h^p$ reduce to identies. 
Then in this case, \eqref{eq:newDG} reduces to the LDG method proposed in \cite{castillo2000priori}.
\begin{remark}
There are four filed: $u_h, \bm p_h, \check u_h, \check p_h$. Theoretically by eliminating any 
$m$-fields for $m\le 3$, we obtain: 
$$
C_4^1+C_4^2+C_4^3=4+6+4=14
$$
namely $14$ methods. Some of the methods should be hybridized algorithms. 
These algorithms have special interesting case under special assumption, e.g.
primal method and mixed method.
\end{remark}

\section{Conclusion}
The unified formulation, presented in this paper, is a
4-field formulation that deduces most existing finite element methods
and DG method as special cases. In particular, we deduce HDG method
and WG method from the formulation and show that they can both be recast
into a DG method derived from the unified formulation.  In addition,
we prove two types of uniform inf-sup conditions for the
formulation, which naturally lead to uniform inf-sup conditions of
HDG, WG and the DG method. 

\bibliography{UnifiedFEM}

\begin{thebibliography}{10}

\bibitem{hrennikoff1941solution}
Alexander Hrennikoff.
\newblock Solution of problems of elasticity by the framework method.
\newblock {\em Journal of Applied Mechanics}, 8(4):169--175, 1941.

\bibitem{courant1943variational}
Richard Courant.
\newblock Variational methods for the solution of problems of equilibrium and
  vibrations.
\newblock {\em Bulletin of the American Mathematical Society}, 49(1):1--23,
  1943.

\bibitem{jones1964generalization}
Robert~E Jones.
\newblock A generalization of the direct-stiffness method of structural
  analysis.
\newblock {\em AIAA Journal}, 2(5):821--826, 1964.

\bibitem{Feng1965finite}
Kang Feng.
\newblock {Finite difference method based on variation principle}.
\newblock {\em Communication on Applied Mathematics and Computation},
  2(4):237--261, 1965.

\bibitem{fraeijs1965displacement}
Baudouin Fraeijs~de Veubeke.
\newblock Displacement and equilibrium models in the finite element method.
\newblock {\em Stress analysis}, pages 145--197, 1965.

\bibitem{Yamamoto1966}
Yoshiyuki Yamamoto.
\newblock {\em A formulation of matrix displacement method}.
\newblock [Cambridge, Mass.]: Massachusetts Institute of Technology, Dept. of
  Aeronautics and Astronautics, 1966.

\bibitem{zlamal1968finite}
Milo\v{s} Zl\'{a}mal.
\newblock On the finite element method.
\newblock {\em Numerische Mathematik}, 12(5):394--409, 1968.

\bibitem{pin1969variational}
Tong Pin and Theodore~HH Pian.
\newblock A variational principle and the convergence of a finite-element
  method based on assumed stress distribution.
\newblock {\em International Journal of Solids and Structures}, 5(5):463--472,
  1969.

\bibitem{ciarlet1971multipoint}
PG~Ciarlet and C~Wagschal.
\newblock {Multipoint Taylor formulas and applications to the finite element
  method}.
\newblock {\em Numerische Mathematik}, 17(1):84--100, 1971.

\bibitem{nicolaides1972class}
RA~Nicolaides.
\newblock On a class of finite elements generated by {L}agrange interpolation.
\newblock {\em SIAM Journal on Numerical Analysis}, 9(3):435--445, 1972.

\bibitem{brezzi1974existence}
Franco Brezzi.
\newblock {On the existence, uniqueness and approximation of saddle-point
  problems arising from Lagrangian multipliers}.
\newblock {\em Revue fran{\c{c}}aise d'automatique, informatique, recherche
  op{\'e}rationnelle. Analyse num{\'e}rique}, 8(2):129--151, 1974.

\bibitem{raviart1977mixed}
Pierre-Arnaud Raviart and Jean-Marie Thomas.
\newblock A mixed finite element method for 2nd order elliptic problems.
\newblock In {\em Mathematical Aspects of Finite Element Methods}, pages
  292--315. Springer, 1977.

\bibitem{nedelec1980mixed}
Jean-Claude N{\'e}d{\'e}lec.
\newblock Mixed finite elements in $\mathbb{R}^3$.
\newblock {\em Numerische Mathematik}, 35(3):315--341, 1980.

\bibitem{arnold1985mixed}
Douglas~N Arnold and Franco Brezzi.
\newblock Mixed and nonconforming finite element methods: implementation,
  postprocessing and error estimates.
\newblock {\em RAIRO-Mod{\'e}lisation math{\'e}matique et analyse
  num{\'e}rique}, 19(1):7--32, 1985.

\bibitem{brezzi1991mixed}
Franco Brezzi and Michel Fortin.
\newblock {\em Mixed and hybrid finite element methods}, volume~15 of {\em
  Springer Series in Computational Mathematics}.
\newblock Springer-Verlag, 1991.

\bibitem{boffi2013mixed}
Daniele Boffi, Franco Brezzi, and Michel Fortin.
\newblock {\em Mixed finite element methods and applications}, volume~44 of
  {\em Springer Series in Computational Mathematics}.
\newblock Springer, 2013.

\bibitem{lions1968penalty}
JL~Lions.
\newblock Probl\`{e}ms aux limites non homog\`{e}nes \`{a} don\'{e}es
  irr\'{e}guli\`{e}res: Une m\'{e}thode d'approximation.
\newblock {\em in Numerical Analysis of Partial Differential Equations
  (C.I.ME.2 Ciclo,Ispra.1967). Edizioni Cremonese, Rome}, pages 283--292, 1968.

\bibitem{aubin1970approximation}
Jean-Pierre Aubin.
\newblock Approximation des problemes aux limites non homogenes pour des
  op{\'e}rateurs non lin{\'e}aires.
\newblock {\em Journal of Mathematical Analysis and Applications},
  30(3):510--521, 1970.

\bibitem{douglas1976interior}
Jim Douglas and Todd Dupont.
\newblock {Interior penalty procedures for elliptic and parabolic Galerkin
  methods}.
\newblock In {\em Computing Methods in Applied Sciences}, pages 207--216.
  Springer, 1976.

\bibitem{cockburn1998local}
Bernardo Cockburn and Chi-Wang Shu.
\newblock {The local discontinuous Galerkin method for time-dependent
  convection-diffusion systems}.
\newblock {\em SIAM Journal on Numerical Analysis}, 35(6):2440--2463, 1998.

\bibitem{brezzi2000discontinuous}
Franco Brezzi, Gianmarco Manzini, Donatella Marini, Paola Pietra, and
  Alessandro Russo.
\newblock {Discontinuous Galerkin approximations for elliptic problems}.
\newblock {\em Numerical Methods for Partial Differential Equations},
  16(4):365--378, 2000.

\bibitem{hong2017unified}
Qingguo Hong, Fei Wang, Shuonan Wu, and Jinchao Xu.
\newblock A unified study of continuous and discontinuous {G}alerkin methods.
\newblock {\em Science China Mathematics}, 62(1):1--32, 2019.

\bibitem{babuvska1973nonconforming}
Ivo Babu{\v{s}}ka and Milo\v{s} Zl{\'a}mal.
\newblock Nonconforming elements in the finite element method with penalty.
\newblock {\em SIAM Journal on Numerical Analysis}, 10(5):863--875, 1973.

\bibitem{wheeler1978elliptic}
Mary~Fanett Wheeler.
\newblock An elliptic collocation-finite element method with interior
  penalties.
\newblock {\em SIAM Journal on Numerical Analysis}, 15(1):152--161, 1978.

\bibitem{arnold1982interior}
Douglas~N Arnold.
\newblock An interior penalty finite element method with discontinuous
  elements.
\newblock {\em SIAM Journal on Numerical Analysis}, 19(4):742--760, 1982.

\bibitem{brezzi1999discontinuous}
Franco Brezzi, Gianmarco Manzini, Donatella Marini, Paola Pietra, and
  Alessandro Russo.
\newblock Discontinuous finite elements for diffusion problems.
\newblock {\em Atti Convegno in onore di F. Brioschi (Milano 1997), Istituto
  Lombardo, Accademia di Scienze e Lettere}, pages 197--217, 1999.

\bibitem{cockburn2000development}
Bernardo Cockburn, George~E Karniadakis, and Chi-Wang Shu.
\newblock The development of discontinuous {G}alerkin methods.
\newblock In {\em Discontinuous Galerkin Methods}, pages 3--50. Springer, 2000.

\bibitem{arnold2002unified}
Douglas~N Arnold, Franco Brezzi, Bernardo Cockburn, and L~Donatella Marini.
\newblock {Unified analysis of discontinuous Galerkin methods for elliptic
  problems}.
\newblock {\em SIAM Journal on Numerical Analysis}, 39(5):1749--1779, 2002.

\bibitem{cockburn2004characterization}
Bernardo Cockburn and Jayadeep Gopalakrishnan.
\newblock A characterization of hybridized mixed methods for second order
  elliptic problems.
\newblock {\em SIAM Journal on Numerical Analysis}, 42(1):283--301, 2004.

\bibitem{cockburn2009unified}
Bernardo Cockburn, Jayadeep Gopalakrishnan, and Raytcho Lazarov.
\newblock {Unified hybridization of discontinuous Galerkin, mixed, and
  continuous Galerkin methods for second order elliptic problems}.
\newblock {\em SIAM Journal on Numerical Analysis}, 47(2):1319--1365, 2009.

\bibitem{cockburn2010projection}
Bernardo Cockburn, Jayadeep Gopalakrishnan, and Francisco-Javier Sayas.
\newblock {A projection-based error analysis of HDG methods}.
\newblock {\em Mathematics of Computation}, 79(271):1351--1367, 2010.

\bibitem{cockburn2016static}
Bernardo Cockburn.
\newblock Static condensation, hybridization, and the devising of the {HDG}
  methods.
\newblock In {\em Building Bridges: Connections and Challenges in Modern
  Approaches to Numerical Partial Differential Equations}, pages 129--177.
  Springer, 2016.

\bibitem{wang2013weak}
Junping Wang and Xiu Ye.
\newblock {A weak Galerkin finite element method for second-order elliptic
  problems}.
\newblock {\em Journal of Computational and Applied Mathematics}, 241:103--115,
  2013.

\bibitem{wang2014weak}
Junping Wang and Xiu Ye.
\newblock {A weak Galerkin mixed finite element method for second order
  elliptic problems}.
\newblock {\em Mathematics of Computation}, 83(289):2101--2126, 2014.

\bibitem{wang2015weak}
Junping Wang and Chunmei Wang.
\newblock {Weak Galerkin finite element methods for elliptic PDEs}.
\newblock {\em Scientia Sinica Mathematica}, 45(7):1061--1092, 2015.

\bibitem{hong2017uniformly}
Qingguo Hong and Jinchao Xu.
\newblock {Uniformly stable results and error estimates for some discontinuous
  Galerkin methods}.
\newblock {\em arXiv preprint arXiv:1805.09670}, 2018.

\bibitem{brenner2007mathematical}
Susanne~C Brenner and Ridgway~L Scott.
\newblock {\em The mathematical theory of finite element methods}, volume~15.
\newblock Springer Science \& Business Media, 2007.

\bibitem{lehrenfeld2010hybrid}
Christoph Lehrenfeld.
\newblock Hybrid discontinuous {G}alerkin methods for solving incompressible
  flow problems.
\newblock {\em Rheinisch-Westfalischen Technischen Hochschule Aachen}, 2010.

\bibitem{oikawa2015hybridized}
Issei Oikawa.
\newblock {A hybridized discontinuous Galerkin method with reduced
  stabilization}.
\newblock {\em Journal of Scientific Computing}, 65(1):327--340, 2015.

\bibitem{brezzi1985two}
Franco Brezzi, Jim Douglas, and L~Donatella Marini.
\newblock Two families of mixed finite elements for second order elliptic
  problems.
\newblock {\em Numerische Mathematik}, 47(2):217--235, 1985.

\bibitem{brezzi1987mixed}
Franco Brezzi, Jim Douglas, Ricardo Dur{\'a}n, and Michel Fortin.
\newblock Mixed finite elements for second order elliptic problems in three
  variables.
\newblock {\em Numerische Mathematik}, 51(2):237--250, 1987.

\bibitem{raviart1975hybrid}
Pierre-Arnaud Raviart.
\newblock Hybrid finite element methods for solving 2nd order elliptic
  equations.
\newblock {\em Topics in numerical analysis, II (JJM Miller, ed.)}, pages
  141--155, 1975.

\bibitem{wolf1975alternate}
John~P Wolf.
\newblock Alternate hybrid stress finite element models.
\newblock {\em International Journal for Numerical Methods in Engineering},
  9(3):601--615, 1975.

\bibitem{raviart1977primal}
Pierre-Arnaud Raviart and Jean-Marie Thomas.
\newblock Primal hybrid finite element methods for 2nd order elliptic
  equations.
\newblock {\em Mathematics of Computation}, 31(138):391--413, 1977.

\bibitem{argyris1954energy}
John~H Argyris.
\newblock Energy theorems and structural analysis: a generalized discourse with
  applications on energy principles of structural analysis including the
  effects of temperature and non-linear stress-strain relations.
\newblock {\em Aircraft Engineering and Aerospace Technology}, 26(10):347--356,
  1954.

\bibitem{turner1956stiffness}
MJ~Turner.
\newblock Stiffness and deflection analysis of complex structures.
\newblock {\em Journal of the Aeronautical Sciences}, 23:805--823, 1956.

\bibitem{mikhlin1964variational}
Solomon~G Mikhlin.
\newblock {\em Variational methods in mathematical physics}, volume~50.
\newblock Pergamon Press; [distributed by Macmillan, New York], 1964 (original
  Russian edition: 1957).

\bibitem{cea1964approximation}
Jean C{\'e}a.
\newblock Approximation variationnelle des probl{\`e}mes aux limites.
\newblock {\em Ann. Inst. Fourier (Grenoble)}, 14(fasc. 2):345--444, 1964.

\bibitem{pian1964derivation}
Theodore~HH Pian.
\newblock Derivation of element stiffness matrices by assumed stress
  distributions.
\newblock {\em AIAA Journal}, 2(7):1333--1336, 1964.

\bibitem{crouzeix1973conforming}
Michel Crouzeix and Pierre-Arnaud Raviart.
\newblock {Conforming and nonconforming finite element methods for solving the
  stationary Stokes equations I}.
\newblock {\em Revue fran{\c{c}}aise d'automatique, informatique, recherche
  op{\'e}rationnelle. Math{\'e}matique}, 7(3):33--75, 1973.

\bibitem{cockburn2008superconvergent}
Bernardo Cockburn, Bo~Dong, and Johnny Guzm{\'a}n.
\newblock {A superconvergent LDG-hybridizable Galerkin method for second-order
  elliptic problems}.
\newblock {\em Mathematics of Computation}, 77(264):1887--1916, 2008.

\bibitem{castillo2000priori}
Paul Castillo, Bernardo Cockburn, Ilaria Perugia, and Dominik Sch{\"o}tzau.
\newblock {An a priori error analysis of the local discontinuous Galerkin
  method for elliptic problems}.
\newblock {\em SIAM Journal on Numerical Analysis}, 38(5):1676--1706, 2000.

\end{thebibliography}
\bibliographystyle{unsrt} 

\end{document}